\pdfoutput=1
\RequirePackage{ifpdf}
\ifpdf 
\documentclass[pdftex]{sigma}
\else
\documentclass{sigma}
\fi

\usepackage{tikz-cd}

\numberwithin{equation}{section}

\newtheorem{Theorem}{Theorem}[section]
\newtheorem{thm}[Theorem]{Theorem}
\newtheorem*{Theorem*}{Theorem}

\newtheorem{cor}[Theorem]{Corollary}
\newtheorem{Lemma}[Theorem]{Lemma}

\newtheorem{prop}[Theorem]{Proposition}
 { \theoremstyle{definition}

\newtheorem{Example}[Theorem]{Example}

\newtheorem{rmk}[Theorem]{Remark} }

\newcommand{\BC}{{\mathbb{C}}}

\newcommand{\BQ}{{\mathbb{Q}}}

\newcommand{\BZ}{{\mathbb{Z}}}

\newcommand{\CE}{{\mathcal E}}
\newcommand{\CF}{{\mathcal F}}

\newcommand{\CL}{{\mathcal L}}

\newcommand{\CO}{{\mathcal O}}

\newcommand{\CZ}{{\mathcal Z}}

\newcommand{\Fg}{{\mathfrak{g}}}

\newcommand{\pt}{{\mathsf{p}}}
\newcommand{\ch}{{\mathrm{ch}}}
\newcommand{\td}{{\mathrm{td}}}

\DeclareFontFamily{OT1}{rsfs}{}
\DeclareFontShape{OT1}{rsfs}{n}{it}{<-> rsfs10}{}
\DeclareMathAlphabet{\curly}{OT1}{rsfs}{n}{it}

\newcommand\End{\operatorname{End}}

\newcommand{\Coh}{\mathrm{Coh}}
\newcommand{\Pic}{\mathop{\rm Pic}\nolimits}

\newcommand{\SO}{\mathrm{SO}}

\newcommand\rk{\operatorname{\mathsf{rk}}}

\begin{document}
\allowdisplaybreaks

\newcommand{\arXivNumber}{2201.03833}

\renewcommand{\thefootnote}{}

\renewcommand{\PaperNumber}{076}

\FirstPageHeading

\ShortArticleName{Universality of Descendent Integrals over Moduli Spaces}

\ArticleName{Universality of Descendent Integrals\\ over Moduli Spaces of Stable Sheaves on $\boldsymbol{K3}$ Surfaces\footnote{This paper is a~contribution to the Special Issue on Enumerative and Gauge-Theoretic Invariants in honor of Lothar G\"ottsche on the occasion of his 60th birthday. The~full collection is available at \href{https://www.emis.de/journals/SIGMA/Gottsche.html}{https://www.emis.de/journals/SIGMA/Gottsche.html}}}

\Author{Georg OBERDIECK}

\AuthorNameForHeading{G.~Oberdieck}

\Address{Mathematisches Institut, Universit\"at Bonn, Endenicher Allee 60, D-53115 Bonn, Germany}
\Email{\href{mailto:georgo@math.uni-bonn.de}{georgo@math.uni-bonn.de}}

\ArticleDates{Received January 23, 2022, in final form October 06, 2022; Published online October 13, 2022}

\Abstract{We interprete results of Markman on monodromy operators as a universality statement for descendent integrals over moduli spaces of stable sheaves on $K3$ surfaces. This yields effective methods to reduce these descendent integrals to integrals over the punctual Hilbert scheme of the $K3$ surface. As an application we establish the higher rank Segre--Verlinde correspondence for $K3$ surfaces as conjectured by G\"ottsche and Kool.}

\Keywords{moduli spaces of sheaves; $K3$ surfaces; descendent integrals}

\Classification{14D20; 14J28; 14J80; 14J60}

\renewcommand{\thefootnote}{\arabic{footnote}}
\setcounter{footnote}{0}

\section{Introduction}
\subsection{Descendent integrals}
Let $M$ be a proper and fine\footnote{See Remark~\ref{rmk:twisted universal sheaf} for the extension to the case where only a quasi-universal family exists.}
moduli space of Gieseker stable sheaves $F$ on a $K3$ surface $S$ with Mukai vector
\[
v(F) := \ch(F) \sqrt{\td_S} = v \in H^{\ast}(S,\BZ).
\]
Let $\pi_{M}$, $\pi_S$ be the projections of  $M \times S$ to the factors
and let $\CF \in \Coh(M \times S)$ be a universal family.
We define the $k$-th descendent of a class $\gamma \in H^{\ast}(S,\BQ)$ by
\begin{equation}
\tau_k(\gamma) = \pi_{M \ast}(\pi_S^{\ast}(\gamma) \ch_k(\CF)) \in H^{\ast}(M).
\label{defn descendent}
\end{equation}

Let $P(c_1, c_2, c_3, \dots )$ be a polynomial and consider
an arbitrary integral of descendents and Chern classes of the tangent bundle over the moduli space:
\begin{equation} \label{descendent integral}
\int_{M} \tau_{k_1}(\gamma_1) \cdots \tau_{k_{\ell}}(\gamma_{\ell}) P(c_r(T_{M})).
\end{equation}
The goal of this paper is to explain the following application of
Markman's work~\cite{Markman} on monodromy operators:
\begin{thm}
\label{thm:intro main}
Any integral of the form \eqref{descendent integral}
 can be effectively reconstructed from the set of all integrals \eqref{descendent integral}, where $M$ is replaced by the Hilbert scheme of $n$ points of a $K3$ surface, with $n=\dim M/2$.
\end{thm}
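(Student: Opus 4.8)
The plan is to transport the descendent integral over $M = M_H(v)$ to the punctual Hilbert scheme by means of Markman's monodromy operators, exploiting that any two moduli spaces whose Mukai vectors $v, v'$ satisfy $v^2 = (v')^2 = 2n-2$ are connected through a chain of such operators. First I would reduce the integrand to a polynomial in the descendent classes $\tau_k(\gamma)$ alone. Since $M$ is fine, its tangent bundle is computed by the relative Ext-sheaf $T_M \cong \ext^1_{\pi_M}(\CF,\CF)$ (with $\ext^0$ and $\ext^2$ trivial line bundles because stable sheaves are simple and $K3$ has trivial canonical bundle), so Grothendieck--Riemann--Roch expresses $\ch(T_M)$, and hence each $c_r(T_M)$, as a universal polynomial in the K\"unneth components of $\ch(\CF)$, i.e.\ in the descendents. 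Consequently the integral \eqref{descendent integral} is a linear combination of pure descendent integrals $\int_M \tau_{k_1}(\gamma_1)\cdots\tau_{k_m}(\gamma_m)$, and it suffices to reconstruct these.

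The key structural input from Markman is that the monodromy operators act on $H^{\ast}(M,\BQ)$ compatibly with the tautological structure coming from the universal sheaf. Concretely, I would use that a monodromy operator $\phi\colon H^{\ast}(M,\BQ) \to H^{\ast}(M',\BQ)$ connecting $M = M_H(v)$ to $M' = M_{H'}(v')$ is induced by a geometric correspondence carrying the universal sheaf of $M$ to that of $M'$, and therefore intertwines the descendents: $\phi(\tau_k(\gamma)) = \tau'_k(\psi(\gamma))$ for an induced Hodge isometry $\psi\colon H^{\ast}(S,\BQ)\to H^{\ast}(S',\BQ)$ of the Mukai lattices. Because $\phi$ is a ring isomorphism preserving the degree grading and sending the fundamental class to the fundamental class, it identifies the two Poincar\'e pairings, whence the pure descendent integrals over $M$ equal the corresponding ones over $M'$ after the relabelling $\gamma_i \mapsto \psi(\gamma_i)$; expanding each $\psi(\gamma_i)$ in a fixed basis then writes the integral over $M$ as a combination of standard descendent integrals over $M'$.

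To complete the reduction I would take $v' = (1,0,1-n)$, the Mukai vector of an ideal sheaf of $n$ points, so that $M' = \mathrm{Hilb}^n(S')$ with $n = v^2/2 + 1 = \dim M/2$. Since the Mukai lattice is even of signature $(4,20)$ and contains enough hyperbolic summands, its orthogonal group acts transitively on primitive vectors of fixed square, and Markman's results guarantee that the relevant isometry $\psi$ with $\psi(v)=v'$ is realized by a parallel-transport operator. Effectiveness follows because these operators are built from an explicit generating set --- parallel transport along families of moduli spaces, Markman's reflection operators, and the tautological correspondences --- on each of which the action on the descendents is known; composing them yields an algorithm expressing any integral \eqref{descendent integral} in terms of integrals over the punctual Hilbert scheme.

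The main obstacle will be the compatibility of the monodromy operators with the descendent classes. The universal sheaf is only defined up to tensoring by a line bundle $\pi_M^{\ast} L$ pulled back from $M$, under which $\ch(\CF') = \ch(\CF)\cdot \pi_M^{\ast}\ch(L)$ mixes descendents of different weights; distinct moduli spaces in the chain need not carry ``the same'' universal sheaf under the correspondence. Tracking this normalization ambiguity --- and verifying that Markman's operators can be chosen to intertwine the descendents exactly, rather than only up to such a controlled twist --- is the crux. I expect this to reduce to checking the compatibility on the generators of the monodromy group, where the action on $\ch(\CF)$ is explicit, together with a bookkeeping argument absorbing any factor of $c_1(L)$ (itself a tautological class) into a redefinition of the descendent arguments.
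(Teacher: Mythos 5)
Your overall strategy is the same as the paper's: transport the integral to the Hilbert scheme via Markman's operators, after moving $v$ to the Mukai vector $1-(n-1)\pt$ of $S^{[n]}$ by an isometry of the Mukai lattice (your GRR reduction of $c_r(T_M)$ to descendents, via $T_M\cong\ext^1_{\pi_M}(\CF,\CF)$, is a harmless variant; the paper instead uses that Markman's operator fixes the Chern classes of the tangent bundle, Theorem~\ref{thm:Markman_operator}(d)). However, the central step of your second paragraph, the intertwining $\phi(\tau_k(\gamma))=\tau'_k(\psi(\gamma))$, is false as stated, and the first of the two repairs you offer at the end --- that the operators ``can be chosen to intertwine the descendents exactly'' --- is not available even in principle. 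The classes $\tau_k(\gamma)$ are not intrinsic to $M$: replacing $\CF$ by $\CF\otimes\pi_M^{\ast}L$ changes them, so an operator $H^{\ast}(M_1)\to H^{\ast}(M_2)$ that intertwines the descendents for one pair of universal families will fail to do so for another. Your second repair is the correct one, and it is precisely what the paper implements: Markman's operator intertwines the canonically normalized class $u_v=\exp\big(\theta_{\CF}(v)/(v,v)\big)\ch(\CF)\sqrt{\td_S}$, which \emph{is} independent of the choice of universal family; equivalently $\gamma(g)B(x)=B(gx)$ for $B(x)=\pi_{M\ast}\big(u_v\cdot x^{\vee}\big)$ (Corollary~\ref{corX}). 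One then writes each $\tau_k(\gamma)$, for whatever $\CF$ was chosen, as a polynomial in the classes $B_j(x)$, absorbing the twist through $\theta_{\CF}(v)\in H^2(M,\BQ)=B_1\big(v^{\perp}_{\BQ}\big)$ into the arguments; the integral then depends only on the Gram matrix of the arguments together with $v$, and this data can be realized on the Hilbert scheme side (Theorem~\ref{thm:universality}). So the crux you identified is genuine and fillable along the line you indicate, but your sketch stops short of closing it.

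A second, separate gap: your argument does not cover $\dim M=2$, and cannot as it stands. Markman's operator theory --- and already the normalization $u_v$, which divides by $(v,v)$ --- requires $\dim M=(v,v)+2>2$, whereas the theorem is asserted for every proper fine moduli space, including two-dimensional ones. The paper treats this case by a different argument (Section~\ref{subsec:Case dim 2}): the universal family induces a derived equivalence whose normalized cohomological action $\widetilde{\Phi}$ is a Hodge isometry of Mukai lattices, and since an integral over a surface involves at most two classes of positive degree, every integral \eqref{descendent integral} reduces to Mukai pairings of the arguments of $\widetilde{\Phi}$, which are computed on $S^{[1]}\cong S$. You would need to add such an argument, or explicitly restrict your proof to $\dim M>2$.
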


We refer to Section~\ref{sec:Markman universality} for the precise form the reconstruction of the theorem takes.
In particular, Theorem~\ref{thm:universality} is a universality statement for descendent integrals over $M$, that immediately implies Theorem~\ref{thm:intro main}.

\subsection{Segre numbers}
As a concrete application of Theorem~\ref{thm:intro main}
we prove a conjecture of G\"ottsche and Kool
which was made in~\cite[Conjecture~5.1]{GK}:
Consider the decomposition of $v \in H^{\ast}(S,\BZ)$ according to degree
\[
v = (\rk(v), c_1(v), v_2) \in H^0(S,\BZ) \oplus H^2(S,\BZ) \oplus H^4(S,\BZ),
\]
and assume that
\[
\rk(v) > 0.
\]

For any topological $K$-theory class $\alpha \in K(S)$ define
\[
\alpha_M = \ch\bigl(-\pi_{M \ast}\big(\pi_S^{\ast}(\alpha)\otimes\CF \otimes \det(\CF)^{-1/\rk(v)} \big)\bigr)
\]
whenever a $\rk(v)$-th root of $\det(\CF)$ exists.
Otherwise we define $\alpha_M$ by a formal application of the Grothendieck--Riemann--Roch formula.
Let $c(\alpha_M)$ be the Chern class corresponding to $\alpha_M$, see Remark~\ref{rmk: c applied to coh class}.

For $\sigma \in H^{\ast}(S)$ consider, with the same convention if the root does not exist, the class
\[
\mu_M(\sigma)=- \pi_{M \ast}\big(\ch_2\big(\CF \otimes \det(\CF)^{-1/\rk(v)}\big)\pi_S^{\ast}(\sigma)\big).
\]
We will usually drop the subscript $M$ from the notation.

\begin{thm} \label{thm:Intro}
Let $n = \frac{1}{2} \dim M$ and let $\pt \in H^4(S,\BZ)$ be the class of a point.
For any $\alpha \in K(S)$,
class $L \in H^2(S)$ and $u \in \BC$ we have
\begin{equation*}
\int_{M} c(\alpha_M) {\rm e}^{\mu(L) + u \mu(\pt)}
=\int_{S^{[n]}} c(\beta_{S^{[n]}}) {\rm e}^{\mu(L) + u \rk(v) \mu(\pt)},
\end{equation*}
where $\beta \in K(S)$ is any $K$-theory class such that
\begin{gather}
\rk(\beta) = \frac{\rk(\alpha)}{\rk(v)}, \quad\
c_1(\alpha)^2 = c_1(\beta)^2, \quad\
c_1(\alpha) \cdot L = c_1(\beta) \cdot L, \quad\
v_2(\beta) = \rk(v) v_2(\alpha).
\label{conditions on beta}
\end{gather}
\end{thm}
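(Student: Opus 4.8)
The plan is to recognize both sides as descendent integrals of the shape \eqref{descendent integral} and then feed them into the universality statement underlying Theorem~\ref{thm:intro main}.

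\emph{Reduction to descendents.} First I would expand the left-hand integrand. Writing $\CF'=\CF\otimes\det(\CF)^{-1/\rk(v)}$ for the normalized universal sheaf, the projection formula and Grothendieck--Riemann--Roch express each component $\ch_k(\CF')$, and hence $\alpha_M$, $\mu(L)$ and $\mu(\pt)$, as universal polynomials in the descendents $\tau_j(\gamma)$ of classes $\gamma$ built from $\ch(\alpha)\,\td_S$, $L$ and $\pt$. Thus $c(\alpha_M)\,{\rm e}^{\mu(L)+u\mu(\pt)}$ is a polynomial in descendent classes, so the left side is of the form \eqref{descendent integral}; the right side is handled identically on $S^{[n]}$ with $\rk=1$. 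The role of the twist is that $\ch_1(\CF')=c_1(\CF')=0$, so the descendents of $\CF'$ are exactly the monodromy-covariant classes governed by Markman's operators.

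\emph{Applying universality.} I would then invoke Theorem~\ref{thm:universality}: it evaluates the integral over $M_v$ as a universal expression determined by the Mukai pairings among $v$, the Mukai vector $v(\alpha)$, $L$ and $\pt$ (together with $n$ and $u$), and it identifies this with the same expression on $S^{[n]}$ via the parallel-transport isometry $g$ of $H^{\ast}(S,\BZ)$ with $g(v)=(1,0,1-n)$ provided by~\cite{Markman} (reducing, by deformation invariance of the integrals, to an $S$ carrying such a Hodge isometry). Under $g$ the normalized descendents transform through $\gamma\mapsto g(\gamma)$, so the left integral equals the integral over $S^{[n]}$ with the insertions replaced by their $g$-images.

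\emph{Matching the numerical data.} The remaining task is to check that these $g$-images are captured exactly by \eqref{conditions on beta} and the rescaling $u\mapsto u\,\rk(v)$. Since, by universality, each side depends on its bundle class only through the pairings of $v(\alpha)$ (respectively $v(\beta)$) with $v$, $L$ and $\pt$, and since $g$ is an isometry, matching the two expressions — in particular as functions of the free class $L$ — forces $c_1(\alpha)^2=c_1(\beta)^2$ and $c_1(\alpha)\cdot L=c_1(\beta)\cdot L$. The discrepancy between the two determinant normalizations — by $\rk(v)$ on $M_v$ and by $1$ on $S^{[n]}$ — together with the change of Mukai vector from $v$ to $(1,0,1-n)$, produces the reciprocal rank scaling $\rk(\beta)=\rk(\alpha)/\rk(v)$ and the compensating $v_2(\beta)=\rk(v)\,v_2(\alpha)$; these are consistent with the invariance of the self-pairing $c_1^2-2\,\rk\,v_2$, and the same normalization discrepancy rescales the point insertion to $u\,\rk(v)\mu(\pt)$.

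\emph{Main obstacle.} I expect the crux to be the last two steps: pinning down the precise equivariance of the normalized descendents under Markman's operators, and tracking the factors of $\rk(v)$ introduced by the normalization $\det(\CF)^{-1/\rk(v)}$ — including its purely formal Grothendieck--Riemann--Roch meaning when the root does not exist. One must also verify that the freedom in choosing $\beta$ is immaterial, which holds precisely because, by universality, both integrals depend on $\alpha$ and $\beta$ only through the four invariants listed in \eqref{conditions on beta}.
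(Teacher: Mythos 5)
Your overall strategy coincides with the paper's: express the integrands through classes covered by the universality statement (Theorem~\ref{thm:universality}), conclude that each side depends only on Mukai pairings, and match those pairings to extract \eqref{conditions on beta} and the rescaling $u\mapsto u\,\rk(v)$. However, there is a genuine gap exactly at the point you flag as the ``main obstacle,'' and it is not a routine verification --- it is the heart of the proof. Your claim that, because $c_1(\CF')=0$, ``the descendents of $\CF'$ are exactly the monodromy-covariant classes governed by Markman's operators'' is unjustified and in fact false: Markman's normalization $u_v=\exp\bigl(\theta_{\CF}(v)/(v,v)\bigr)\ch(\CF)\sqrt{\td_S}$ twists only by a class pulled back from $M$, whereas $\det(\CF)^{-1/\rk(v)}$ twists by $\exp(-c_1(\CF)/\rk(v))$, whose first Chern class has both an $M$-component $\theta_{\CF}(\pt)$ and an $S$-component $c_1(v)$; since a Markman isometry $g$ need not fix $\pt$ or $c_1(v)$, the classes built from $\CF'$ are not covariant in the naive sense. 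The paper bridges this by two explicit lemmas computing, e.g.,
\[
\alpha_M=-B\bigg(v\big(\alpha^{\vee}\big)\exp\bigg(\frac{c_1(v)}{\rk(v)}\bigg)\bigg)\exp\bigg(B_1\bigg(\frac{-\pt}{\rk(v)}-\frac{v}{v\cdot v}\bigg)\bigg),
\]
so that the classes whose pairings control the integral are those in \eqref{dependence parameters}: $v$, $\pt/\rk(v)$, $v(\alpha)^{\vee}{\rm e}^{c_1(v)/\rk(v)}$, $L\,{\rm e}^{c_1(v)/\rk(v)}$, $u\pt$ --- not $v(\alpha)$, $L$, $\pt$ as you assert. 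This is consequential, not cosmetic: with your untwisted list the pairings $v\cdot L$ and $v\cdot v(\alpha)$ involve $c_1(v)\cdot L$ and $c_1(v)\cdot c_1(\alpha)$, which have no counterpart on $S^{[n]}$ (where $c_1(v)=0$) and would force spurious conditions, and matching $\pt$ against $\pt$ would give $\rk(\beta)=\rk(\alpha)$ instead of $\rk(\beta)=\rk(\alpha)/\rk(v)$. It is precisely the twist by ${\rm e}^{c_1(v)/\rk(v)}$ that makes $v\cdot L\,{\rm e}^{c_1(v)/\rk(v)}=0$ and the class $\pt/\rk(v)$ that produces the factors of $\rk(v)$ in \eqref{final system}; asserting that these factors ``come out'' of the normalization discrepancy is a restatement of what must be proved.

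A second gap: Theorem~\ref{thm:universality} (and the Markman operator as set up in the paper) is only available for $\dim M>2$, while Theorem~\ref{thm:Intro} includes $n=1$. The paper treats $\dim M=2$ separately in Section~\ref{subsec:GK conjecture dim 2}, using the Hodge isometry $\widetilde{\Phi}$ induced by the Fourier--Mukai transform of Section~\ref{subsec:Case dim 2}; your proposal does not cover this case. A minor further point: no deformation-invariance or parallel-transport argument is needed to produce the isometry $g$ --- the Markman operator exists for an arbitrary isometry of $H^{\ast}(S,\BC)$ with $g(v_1)=v_2$, by the Zariski-density argument in the proof of Theorem~\ref{thm:Markman_operator}.
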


As explained in~\cite[Corollary~5.2]{GK} this implies the following closed evaluation
of the {\em Segre numbers} of $M$:

\begin{cor} \label{cor:Segre}
Let $\rho = \rk(v)$, $s = \rk(\alpha)$, $n=\frac{1}{2} \dim M$.
Then we have
\[
\int_{M} c(\alpha_M) = \mathrm{Coeff}_{z^n}\bigl(V_{s}^{c_2(\alpha)} W_{s}^{c_1(\alpha)^2} X_{s}^{2}  \bigr),
\]
where
\begin{gather*}
V_s(z) = \bigg(1+\bigg(1-\frac{s}{\rho}\bigg)t\bigg)^{1-s} \bigg(1+\bigg(2-\frac{s}{\rho}\bigg)t\bigg)^s \bigg(1+\bigg(1-\frac{s}{\rho}\bigg)t\bigg)^{\rho-1},
\\
W_s(z) = \bigg(1+\bigg(1-\frac{s}{\rho}\bigg)t\bigg)^{\frac{1}{2}s-1} \bigg(1+\bigg(2-\frac{s}{\rho}\bigg)t\bigg)^{\frac{1}{2}(1-s)} \bigg(1+\bigg(1-\frac{s}{\rho}\bigg)t\bigg)^{\frac{1}{2} - \frac{1}{2} \rho},
\\
X_s(z) =  \bigg(1+\bigg(1-\frac{s}{\rho}\bigg)t\bigg)^{\frac{1}{2} s^2-s} \bigg(1+\bigg(2-\frac{s}{\rho}\bigg)t\bigg)^{-\frac{1}{2}s^2+\frac{1}{2}}
\\ \hphantom{X_s(z) =}
{}\times \bigg(1+\bigg(1-\frac{s}{\rho}\bigg)\bigg(2-\frac{s}{\rho}\bigg)t\bigg)^{-\frac{1}{2}} \bigg(1+\bigg(1-\frac{s}{\rho}\bigg)t\bigg)^{-\frac{(\rho-1)^2}{2\rho} s}
\end{gather*}
under the variable change $z = t \big(1+\big(1-\tfrac{s}{\rho}\big) t\big)^{1-\frac{s}{\rho}}$.
\end{cor}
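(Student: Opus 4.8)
The plan is to deduce the corollary from Theorem~\ref{thm:Intro} by specializing to $L = 0$ and $u = 0$. With this choice both exponential insertions become trivial and the condition $c_1(\alpha)\cdot L = c_1(\beta)\cdot L$ is vacuous, so the theorem reduces the left-hand side to a Segre integral on the Hilbert scheme:
\[
\int_M c(\alpha_M) = \int_{S^{[n]}} c(\beta_{S^{[n]}}),
\]
where $\beta \in K(S)$ is any class with $\rk(\beta) = s/\rho$, $c_1(\beta)^2 = c_1(\alpha)^2$ and $v_2(\beta) = \rho\, v_2(\alpha)$. Everything is thereby reduced to evaluating tautological Segre integrals over $S^{[n]}$ for a $K3$ surface.

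Next I would invoke the universality of tautological integrals over Hilbert schemes of points. Since $K_S = 0$ and $c_1(T_S) = 0$ on a $K3$ surface, the integral $\int_{S^{[n]}} c(\beta_{S^{[n]}})$ depends only on $n$, $\rk(\beta)$, $c_1(\beta)^2$ and $c_2(\beta)$, and its generating series $\sum_n z^n \int_{S^{[n]}} c(\beta_{S^{[n]}})$ factors as a product of universal power series raised to the powers $c_2(\beta)$, $c_1(\beta)^2$ and $\chi(\CO_S) = 2$, with both these series and the attendant change of variables depending on the rank $\rk(\beta)$. This is exactly the closed Segre evaluation on Hilbert schemes that enters \cite[Corollary~5.2]{GK}.

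It then remains to substitute the invariants of $\beta$. Using $\ch_2 = \tfrac12 c_1^2 - c_2$ and $v_2 = \ch_2 + \rk$ on a $K3$ surface, the relation $v_2(\beta) = \rho\, v_2(\alpha)$ together with $\rk(\beta) = s/\rho$ and $c_1(\beta)^2 = c_1(\alpha)^2$ gives
\[
c_2(\beta) = \rho\, c_2(\alpha) + \tfrac{1-\rho}{2}\, c_1(\alpha)^2 + \tfrac{1-\rho^2}{\rho}\, s.
\]
Feeding this linear expression into the universal generating series and collecting, respectively, the exponents of $c_2(\alpha)$, of $c_1(\alpha)^2$, and the remaining $s$- and $\chi(\CO_S)$-dependent constant produces the three factors $V_s$, $W_s$ and $X_s$, together with the stated change of variables $z = t\big(1+\big(1-\tfrac{s}{\rho}\big)t\big)^{1-\frac{s}{\rho}}$, and yields the claimed coefficient extraction. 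This bookkeeping is carried out in \cite[Corollary~5.2]{GK}.

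I expect the main obstacle to be this final algebraic repackaging: the natural variable of the universal rank-$r$ Segre series is tied to the rank $r = \rk(\beta) = s/\rho$, which is in general not an integer, so $\beta$ and its associated series must be manipulated purely formally --- exactly as the definition of $\alpha_M$ anticipates by allowing a formal Grothendieck--Riemann--Roch expression when the $\rk(v)$-th root of $\det(\CF)$ does not exist. Keeping track of these formal roots and matching the resulting exponents to $V_s$, $W_s$, $X_s$ under the nonstandard change of variables is the delicate part; the reduction to the Hilbert scheme itself is immediate from Theorem~\ref{thm:Intro}.
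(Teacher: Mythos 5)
Your proposal is correct and takes essentially the same route as the paper: the paper deduces the corollary by exactly this reduction --- Theorem~\ref{thm:Intro} moves the Segre integral from $M$ to $S^{[n]}$, the Marian--Oprea--Pandharipande evaluation~\cite{MOP} supplies the Hilbert-scheme series $V_s$, $W_s$, $X_s$, and the remaining bookkeeping (substituting the invariants of $\beta$, including the rational rank $s/\rho$) is delegated to~\cite[Corollary~5.2]{GK}. Your added details (the specialization $L=0$, $u=0$ and the formula for $c_2(\beta)$) are consistent with, and slightly more explicit than, what the paper records.
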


The Segre numbers of the Hilbert scheme of $n$ points on the $K3$ surface $S$
were determined by Marian, Oprea and Pandharipande~\cite{MOP}.
In particular, they found the series $V_s$, $W_s$, $X_s$.
All that Theorem~\ref{thm:Intro} does here is move their result from Hilbert schemes
to moduli spaces of sheaves of arbitrary rank.
Earlier work on Segre numbers can be found in~\cite{EGL, Lehn, MOP1, MOP2, Voisin}.

\subsection{Segre/Verlinde correspondence}
G\"ottsche and Kool conjectured that the Segre numbers of moduli spaces of stable sheaves on surfaces
are related by an explicit correspondence to the {\em Verlinde numbers} of these moduli spaces.
For $K3$ surfaces the Verlinde numbers are known explicitly by
\[
\chi\big(M, \mu(L) \otimes E^{\otimes r}\big) = \mathrm{Coeff}_{w^n}\big( G_{r}^{\chi(L)} \, F_{r}^{\frac{1}{2} \chi(\CO_S)} \big),
\]
where
\begin{equation*}
F_r(w) = (1+v)^{\frac{r^2}{\rho^2}}\bigg(1+\frac{r^2}{\rho^2} v\bigg)^{-1}, \qquad
G_r(w) = 1+v
\end{equation*}
under the variable change $w = v(1+v)^{r^2 / \rho^2-1},$
and we refer to~\cite[equation~(4)]{GK} for the definition of the class $\mu(L) \otimes E^{\otimes r} \in \Pic(M)_{\BQ}$.
The Verlinde numbers of the Hilbert schemes of points of~$K3$ surfaces (and in particular the series $F_r$, $G_r$) were first computed in~\cite{EGL}.
The computation for moduli spaces of higher rank sheaves reduces to the Hilbert scheme case as shown in~\cite{GNY2} using hyperk\"ahler geometry,
parallel to Theorem~\ref{thm:Intro}.

The functions $F_r$, $G_r$ and $V_s$, $W_s$, $X_s$ are related by the following variable change~\cite{GK}:
\begin{gather*}
F_r(w) = V_s(z)^{\frac{s}{\rho} (\rho^{\frac{1}{2}} - \rho^{-\frac{1}{2}})^2} \, W_s(z)^{-\frac{4s}{\rho}} \, X_s(z)^2,
\\
G_r(w) = V_s(z) \, W_s(z)^2,
\end{gather*}
where $s = \rho + r$ and $v = t\big( 1- \tfrac{r}{\rho} t \big)^{-1}$.

Hence with Corollary~\ref{cor:Segre} we have proven that the Segre and Verlinde numbers of moduli spaces of stable sheaves on $K3$ surfaces
are related by this variable change.
This is the $K3$ surface case of the higher-rank Segre--Verlinde correspondence conjectured by G\"ottsche--Kool~\cite[Conjecture~1.7]{GK}.

\begin{cor}
The higher-rank Segre--Verlinde correspondence holds for $K3$ surfaces.
\end{cor}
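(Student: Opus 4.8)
The plan is to obtain the statement as the synthesis of the two explicit evaluations now available, glued together by the elementary function identities recorded above. The assertion of~\cite[Conjecture~1.7]{GK} is precisely that the Segre numbers and the Verlinde numbers of $M$ are carried into one another by the substitution $s=\rho+r$, $v=t\big(1-\tfrac{r}{\rho}t\big)^{-1}$; so it suffices to verify that the Segre evaluation of Corollary~\ref{cor:Segre} is transported to the Verlinde evaluation $\chi\big(M,\mu(L)\otimes E^{\otimes r}\big)=\mathrm{Coeff}_{w^n}\big(G_r^{\chi(L)}F_r^{\frac12\chi(\CO_S)}\big)$ under exactly this change of variables. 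The Verlinde evaluation itself is not new: it is due to~\cite{EGL} for Hilbert schemes and to~\cite{GNY2} in higher rank, paralleling the role of~\cite{MOP} on the Segre side.

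First I would feed in the $K3$ numerics, $\chi(\CO_S)=2$ and $\chi(L)=2+\tfrac12 L^2$ by Riemann--Roch, so that the Verlinde exponents become explicit. Next I would substitute the displayed identities $G_r=V_sW_s^2$ and $F_r=V_s^{\frac{s}{\rho}(\rho^{1/2}-\rho^{-1/2})^2}W_s^{-4s/\rho}X_s^2$ into $G_r^{\chi(L)}F_r^{\frac12\chi(\CO_S)}$ and collect the resulting powers of $V_s$, $W_s$ and $X_s$. The correspondence then reduces to matching these powers with the exponents $c_2(\alpha)$, $c_1(\alpha)^2$ and $2$ of Corollary~\ref{cor:Segre}, a finite linear computation in $\rho$, $s$, $L^2$ and the Chern data of $\alpha$.

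The one place demanding care is that the two coefficient extractions live on different uniformizers: Corollary~\ref{cor:Segre} reads off the $z^n$-coefficient with $z=t\big(1+(1-\tfrac{s}{\rho})t\big)^{1-\frac{s}{\rho}}$, while the Verlinde series reads off the $w^n$-coefficient with $w=v(1+v)^{r^2/\rho^2-1}$. I would reconcile the two by following the chain $t\mapsto v\mapsto w$ determined by $v=t\big(1-\tfrac{r}{\rho}t\big)^{-1}$, using $1-\tfrac{r}{\rho}t=1+(1-\tfrac{s}{\rho})t$ and $1+v=\big(1+(2-\tfrac{s}{\rho})t\big)\big/\big(1+(1-\tfrac{s}{\rho})t\big)$ together with $r=s-\rho$, and tracking the passage between the variables $z$ and $w$ by Lagrange inversion. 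Since~\cite{GK} already established the function identities between $(F_r,G_r)$ and $(V_s,W_s,X_s)$, this last step amounts to invoking their computation, after which Corollary~\ref{cor:Segre} is carried exactly onto the Verlinde evaluation --- which is the higher-rank Segre--Verlinde correspondence for $K3$ surfaces.
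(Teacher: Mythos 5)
Your proposal follows exactly the paper's route: the corollary is obtained by combining the new Segre evaluation (Corollary~\ref{cor:Segre}), the previously known Verlinde evaluation due to~\cite{EGL} and~\cite{GNY2}, and the function identities relating $(F_r,G_r)$ to $(V_s,W_s,X_s)$ already established in~\cite{GK}, so that nothing beyond assembling these three ingredients is needed. Your additional bookkeeping (the $K3$ values $\chi(\CO_S)=2$, $\chi(L)=2+\tfrac12 L^2$, and the reconciliation of the uniformizers $z$ and $w$) is correct but is precisely the content the paper delegates to the cited computation of~\cite{GK}.
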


\subsection{Plan}
In Section~\ref{sec:Markman universality}, we use results from Markman's beautiful article~\cite{Markman}
to formulate a universality result for descendent integrals
of moduli spaces of stable sheaves on $K3$ surfaces, see Theorem~\ref{thm:universality}.
This immediately yields Theorem~\ref{thm:intro main}.
In Section~\ref{GK conjecture}, we prove Theorem~\ref{thm:Intro}.

\section{Markman's universality} \label{sec:Markman universality}
\subsection{Basic definitions} 
Let $S$ be a $K3$ surface and consider the lattice $\Lambda = H^{\ast}(S,\BZ)$ endowed with the Mukai pairing
\[
(x , y) := - \int_S x^{\vee} y,
\]
where, if we decompose an element $x \in \Lambda$ according to degree as $(r,D,n)$, we have written $x^{\vee} = (r,-D,n)$.
We will also write
\[
\rk(x) = r, \qquad c_1(x) = D, \qquad v_2(x) = n.
\]
Given a sheaf or complex $E$ on $S$ the Mukai vector of $E$ is defined by
\[
v(E) = \sqrt{\td_S} \cdot \ch(E) \in \Lambda.
\]

Let $v \in \Lambda$ be an effective\footnote{Following~\cite[Definition 1.1]{Markman},
this means that $v \cdot v \geq -2$ and $\rk(v) \geq 0$, and if $\rk(v)=0$ then $c_1(v)$ is effective or zero, and if $\rk(v)=c_1(v)=0$ then $v_2>0$.} vector, $H$ be an ample divisor on $S$
and let
\[
M := M_H(v)
\]
be the moduli space of $H$-stable sheaves with Mukai vector $v$.
The moduli space is smooth and holomorphic-symplectic of dimension $2+(v,v)$.
We further assume that the Mukai vector~$v$ is primitive, and the polarization $H$ is $v$-generic (see~\cite[Theorem~6.2.5]{HL}),
so that $M$ is also proper (in particular, semistability is equivalent to stability).
We also assume that there exists a universal sheaf $\CF$ on $M_H(v) \times S$.

\begin{rmk} \label{rmk:twisted universal sheaf}
The results we state below also hold in the case where there exists only a twisted universal sheaf.
More precisely, all statements below can be formulated
in terms of the Chern character $\ch(\CF)$ alone and this class can be defined in the twisted case as well, see~\cite[Section~3]{MarkmanGenerators}.
The proofs carry over likewise since all ingredients hold in the twisted case as well. 
\end{rmk}
\begin{rmk}
More generally, one can also work with $\sigma$-stable objects for a Bridgeland stability condition in the distinguished component.
\end{rmk}

Assume from now on that\footnote{We return to the case $\dim M = 2$ in Section~\ref{subsec:Case dim 2}.}
\[
\dim M = (v,v) + 2 > 2.
\]
Consider the morphism $\theta_{\CF} \colon \Lambda \to H^2(M_H(v),\BZ)$ defined by
\begin{equation}
\theta_{\CF}(x) = \big[ \pi_{M \ast}\big( \ch(\CF)
\pi_S^{\ast}\big( \sqrt{\td_S} \cdot x^{\vee}  \big) \big) \big]_{2},
\label{pushforward} \end{equation}
where $[ - ]_{k}$ stands for taking the degree $k$ component of a cohomology class.
Then $\theta_{\CF}$ restricts to an isomorphism
\begin{equation}
\theta = \theta_{\CF}|_{v^{\perp}} \colon \ v^{\perp} \xrightarrow{\cong} H^2(M_H(v),\BZ) \label{identification}
\end{equation}
which does not depend on the choice of universal family (use that the degree $0$ component of the pushforward in \eqref{pushforward} vanishes)
and for which we hence have dropped the subscript $\CF$.
The~isomorphism $\theta$ is an isometry with respect to the Mukai pairing on the left, and the pairing given by the Beauville--Bogomolov--Fujiki form on the right.
We will identify $v^{\perp} \subset \Lambda$ with $H^2(M_{H}(v),\BZ)$ under this isomorphism.

The universal sheaf $\CF$ and hence its Chern character $\ch(\CF)$ is uniquely determined only up to tensoring by the pullback of a line bundle from $M$.
Following~\cite{Markman}, we can pick a canonical normalization as follows:
\[
u_v := \exp\bigg( \frac{\theta_{\CF}(v) }{(v,v)} \bigg) \cdot \ch(\CF) \cdot \sqrt{\td_S} \ \in H^{\ast}(M \times S),
\]
where we have suppressed the pullback by the projections to $M$ and $S$ in the first and last term on the right.
We will follow similar conventions throughout.
It is immediate to check that $u_v$ is independent from the choice of universal family (replace $\CF$ by $\CF \otimes \pi_M^{\ast} \CL$ and calculate, see~\cite[Lemma~3.1]{Markman}).

\begin{Example} 
Let $M = S^{[n]}$ be the Hilbert scheme of $n$ points on $S$. We have $v=1-(n-1) \pt$,
and we always take $\CF = I_{\CZ}$, the ideal sheaf of the universal subscheme.
If $\alpha \in H^2(S)$ is the class of an effective divisor $A \subset S$, then
\[
\theta(\alpha) = \pi_{S^{[n]} \ast}\big(\ch_2(\CO_{\CZ}) \pi_S^{\ast}(\alpha)\big)
\]
is the class of the locus of subschemes incident to $A$.
If we denote
\[
\delta := -\frac{1}{2} \Delta_{S^{[n]}} = c_1( \pi_{S^{[n]} \ast} \CO_{\CZ} ) = \pi_{S^{[n]} \ast} \ch_3(\CO_{\CZ}),
\]
where $\Delta_{S^{[n]}}$ is the class of the locus of non-reduced subschemes, then
under the identification~\eqref{identification} we have
$\delta = - \big( 1 + (n-1) \pt \big)$.
Because $\theta_{\CF}(v)=-\delta$ the canonical normalization of~$\ch(\CF)$ takes the form
\[
u_v = \exp\bigg( \frac{-\delta}{2n-2} \bigg) \ch(I_{\CZ}) \sqrt{\td_S}.
\]
\end{Example}

\subsection{Markman's operator} 
For $i=1,2$ let $(S_i, H_i, v_i)$ be the data defining
proper fine moduli space of stable sheaves $M_i = M_{H_i}(S_i,v_i)$,
and let $\CF_i$ be the universal family on $M_i \times S_i$.
Consider an isometry of Mukai lattices
\[
g\colon\ H^{\ast}(S_1, \BZ) \to H^{\ast}(S_2, \BZ)
\]
such that $g(v_1) = v_2$.
Let $K(S)$ be the topological $K$-group of $S$ endowed with the Euler pairing $(E , F) = -\chi(E^{\vee} \otimes F)$. We identify $g$ with an isometry
\[
g\colon\ K(S_1) \to K(S_2)
\]
through the lattice isometry
$K(S) \xrightarrow{\cong} H^{\ast}(S,\BZ)$ given by $E \mapsto v(E)$.
Hence the following diagram commutes
\[
\begin{tikzcd}
K_{\mathrm{top}}(S_1) \ar{r}{g} \ar{d}{v} & K_{\mathrm{top}}(S_2) \ar{d}{v} \\
H^{\ast}(S_1, \BZ) \ar{r}{g} & H^{\ast}(S_2, \BZ).
\end{tikzcd}
\]
Similar identification will apply to morphisms $g$ defined over $\BC$.
The Markman operator associated to $g$ is given by the following result:

{\samepage\begin{thm}[Markman] \label{thm:Markman_operator} For any isometry $g\colon H^{\ast}(S_1, \BC) \to H^{\ast}(S_2, \BC)$ such that $g(v_1) = v_2$
there exists a unique operator
\[
\gamma(g)\colon \ H^{\ast}(M_1, \BC) \to H^{\ast}(M_2, \BC)
\]
such that
\begin{enumerate}
\item[$(a)$] $\gamma(g)$ is a degree-preserving isometric\footnote{We endow $H^{\ast}(M)$ with the Poincar\'e pairing: $\langle x,y \rangle = \int_{M} x y$ for all $x,y \in H^{\ast}(M)$.} ring-isomorphism,
\item[$(b)$] $(\gamma(g) \otimes g)(u_{v_1}) = u_{v_2}$.
\end{enumerate}
The operator is called the \emph{Markman operator} and given by
\begin{equation}
\gamma(g) = c_{\dim(M)}\bigl[ - \pi_{13 \ast}\big( \pi_{12}^{\ast} ((1 \otimes g) u_{v_1})^{\vee} \cdot \pi_{23}^{\ast} u_{v_2} \big) \bigr],
\label{Mopnew}
\end{equation}
where $\pi_{ij}$ is the projection of $M_1 \times S_2 \times M_2$ to the $(i,j)$-th factor.
Moreover, we have
\begin{enumerate}
\item[$(c)$] $\gamma(g_1) \circ \gamma(g_2) = \gamma(g_1g_2)$ and $\gamma(g)^{-1} = \gamma\big(g^{-1}\big)$ if it makes sense.
\item[$(d)$] $\gamma(g) c_k(T_{M_1}) = c_k(T_{M_2})$.
\end{enumerate}
\end{thm}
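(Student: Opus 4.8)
The plan is to separate the uniqueness statement from the construction, and to build everything on two structural results of Markman: first, the \emph{generation theorem}, that $H^{\ast}(M,\BC)$ is generated as a $\BC$-algebra by the K\"unneth components of $\ch(\CF)$, equivalently by the components of $u_v$; and second, the fiberwise interpretation of the kernel in \eqref{Mopnew} as an Euler class localizing on a coincidence locus. Granting these, the four properties $(a)$--$(d)$ fall into place, with $(b)$ essentially by design and $(d)$ formal, while the genuinely deep point is the ring-isomorphism half of $(a)$.

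\emph{Uniqueness.} Suppose $\gamma,\gamma'$ both satisfy $(a)$ and $(b)$. Choose the K\"unneth decomposition $u_{v_1} = \sum_i a_i \otimes b_i$ so that $\{b_i\}$ is a basis of $H^\ast(S_1,\BC)$; then the classes $a_i \in H^\ast(M_1,\BC)$ are determined, and by the generation theorem they generate $H^\ast(M_1,\BC)$ as a ring. Condition $(b)$ reads $\sum_i \gamma(a_i)\otimes g(b_i) = u_{v_2}$. Since $g$ is an isomorphism, $\{g(b_i)\}$ is a basis of $H^\ast(S_2,\BC)$, so expanding $u_{v_2}$ in this basis reads off each $\gamma(a_i)$ uniquely; the identical computation pins down $\gamma'(a_i)$, so $\gamma$ and $\gamma'$ agree on a generating set and hence, being ring homomorphisms, agree everywhere.

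\emph{Existence and properties.} Define $\gamma(g)$ by \eqref{Mopnew}. Property $(b)$ is a direct convolution computation, the kernel being engineered so that pairing it against $(1\otimes g)u_{v_1}$ across $S_2$ returns $u_{v_2}$. For $(a)$ the key case is $S_1=S_2$, $v_1=v_2$, $g=\id$: writing $F_m$ for the stable sheaf parametrized by $m$, the K-theory class underlying \eqref{Mopnew} is, over a point $(m,m')$, minus the class of the relative complex $\mathbf{R}\Hom(F_m,F_{m'})$, and Markman's computation identifies its top Chern class $c_{\dim M}$ with the class $[\Delta_M]$ of the diagonal, so that $\gamma(\id)=\id$. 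The functoriality $(c)$ then follows by composing the two kernels and collapsing the intermediate integration via the same diagonal identity; together with $\gamma(\id)=\id$ this makes each $\gamma(g)$ invertible with inverse $\gamma\big(g^{-1}\big)$. That $\gamma(g)$ is moreover a \emph{ring} isomorphism and an isometry is obtained by realizing it as a monodromy (parallel-transport) operator in a family of holomorphic-symplectic deformations of $M$ along which $u_v$ is flat: such transport is induced by a diffeomorphism, hence automatically preserves cup product and the integration pairing, and one checks that \eqref{Mopnew} computes precisely this operator. Finally $(d)$ is formal given $(a)$ and $(b)$: stability on a $K3$ forces $\mathbf{R}\Hom_{\pi_M}(\CF,\CF)$ to take the form $\CO_M \oplus T_M[-1] \oplus \CO_M[-2]$, so $\ch(T_M)$ is a universal polynomial in the components of $u_v$, and since $\gamma(g)$ is a ring homomorphism matching $u_{v_1}$ to $u_{v_2}$ it matches $\ch(T_{M_1})$ to $\ch(T_{M_2})$, hence the $c_k(T_{M_i})$.

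\emph{Main obstacle.} The crux is the ring-isomorphism part of $(a)$. That \eqref{Mopnew} defines a degree-preserving linear isomorphism is comparatively formal once the resolution-of-the-diagonal identity $\gamma(\id)=[\Delta_M]$ is in hand, but showing that the correspondence respects cup product is the deep step: it is exactly what ties the algebraically defined kernel to the geometry of monodromy on hyperk\"ahler manifolds, and is the heart of Markman's work. The composition computation supporting $(c)$, which requires controlling the convolution of two universal kernels across an intermediate moduli space, is the main technical hurdle underlying both invertibility and the identification with parallel transport.
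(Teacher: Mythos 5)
There is a genuine gap, and it sits exactly where the paper's proof does its actual work. Your existence argument --- realizing $\gamma(g)$ as a parallel-transport operator in a family of deformations along which $u_v$ is flat --- can only produce operators for isometries $g$ that are geometrically realizable, i.e., lying in the monodromy group, which is a subgroup of the \emph{integral} orthogonal group $\mathrm{O}(\Lambda)$. The theorem, however, is stated for an arbitrary isometry $g\colon H^{\ast}(S_1,\BC)\to H^{\ast}(S_2,\BC)$ with $g(v_1)=v_2$; a generic such $g$ is not induced by any diffeomorphism, parallel transport, or derived equivalence, so your construction simply does not apply to it. The paper's proof is organized around exactly this point: for integral $g$ it does not reprove anything but cites Markman (Theorems 1.2 and 3.10 of \cite{Markman}, where existence comes from compositions of parallel transports \emph{and} push-forwards along auto-equivalences), and the real content is the extension to complex $g$ by a Zariski-density argument --- one \emph{defines} $\gamma(g)$ by the algebraic formula \eqref{Mopnew}, notes that conditions $(a)$, $(b)$, $(d)$ are Zariski-closed conditions on $g$ (the kernel depends polynomially on $g$), and that they hold on the Zariski-dense subset of integral isometries, hence for all $g$; see also \cite[Proposition~5.1]{Frei}. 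Without some such argument, your proof at best establishes the theorem for integral $g$.

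A second, related error is the claim that property $(b)$ for the operator \eqref{Mopnew} is ``a direct convolution computation, the kernel being engineered'' to return $u_{v_2}$. It is not: the kernel is $c_{\dim M}$ of a pushforward class, a highly non-linear operation in $u_{v_1}$, $u_{v_2}$ and $g$, and there is no formal verification that it satisfies $(b)$. Markman's logic runs in the opposite direction: operators satisfying $(a)$ and $(b)$ are first constructed geometrically (for integral $g$), and then the diagonal-class expression of \cite{MarkmanGenerators} shows that $(a)$ and $(b)$ \emph{force} any such operator to be given by \eqref{Mopnew}. The same objection applies to your treatment of $(c)$ by ``composing the two kernels'': a convolution of two top-Chern-class kernels is not formally again of that shape; both Markman and the paper deduce $(c)$ from uniqueness (both sides satisfy $(a)$ and $(b)$ for $g_1g_2$), which you could have done with the uniqueness statement you already proved. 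On the positive side, two of your ingredients are sound: the uniqueness argument via the generation theorem of \cite{MarkmanGenerators} is correct and is essentially the paper's uniqueness argument (valid over $\BC$, as the paper notes), and your derivation of $(d)$ from $(a)$ and $(b)$, via $\ch(T_M)=2-\pi_{M \ast}\big(u_v^{\vee}\, u_v\big)$ and the isometry property of $g$, is a valid alternative to the paper's route for $(d)$ (which instead invokes density and \cite[Theorem~1.2(6)]{Markman}). But neither of these repairs the missing passage from integral to arbitrary complex isometries.
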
}

\begin{rmk} \label{rmk: c applied to coh class}
Here the Chern class $c_m$ in \eqref{Mopnew} has the following definition:
Let
\[
\ell \colon \ \oplus_i H^{2i}(M,\BQ) \to \oplus_i H^{2i}(M,\BQ)
\]
be the universal map that takes the exponential Chern character to Chern classes,
so in particular $c(E) = \ell( \ch(E) )$ for any vector bundle.
Then given $\alpha \in H^{\ast}(M)$ we write $c_m(\alpha)$ for $\left[ \ell(\alpha) \right]_{2m}$.
\end{rmk}

\begin{rmk}
In Theorem~\ref{thm:Markman_operator}, since the morphism $\gamma(g)$ is a ring isomorphism we have $\gamma(g)1 = 1$. Since $\gamma(g)$ preserves degree and is isometric,
it hence sends the class of a point on $M_1$ to the class of a point on $M_2$. For any $\sigma \in H^{\ast}(M_1)$ we thus observe hat
\[ \int_{M_1} \sigma = \int_{M_2} \gamma(g)(\sigma). \]
\end{rmk}

\begin{proof}[Proof of Theorem~\ref{thm:Markman_operator}]
If $g$ is an integral isometry, then the
statement of the theorem is a~combination
of Theorems~1.2 and~3.10 of~\cite{Markman}.
The proof is involved: Markman establishes that operators $\gamma(g)$ satisfying (a) and (b) exists
by considering arbitrary compositions of parallel transport operators and pushforwards by isomorphisms induced by auto-equivalences.
Then a~small computation starting from an expression for the diagonal class of $M_1$ in terms of the universal sheaf $\CF$ in~\cite{MarkmanGenerators},
shows that conditions $(a)$ and~$(b)$ for any homomorphism forces the expression~\eqref{Mopnew}.
Hence those homomorphisms are uniquely determined.
This last step holds even for homomorphisms defined over $\BC$ which satisfy $(a)$ and~$(b)$.

In the general case, one defines the operator $\gamma(g)$ by~\eqref{Mopnew}.
Then $(a)$ and~$(b)$ holds for a~Zariski dense subset of all operators $g$ (i.e., for the integral isometries).
Hence it holds for all~$g$. Then by the uniqueness statement one observes $(c)$. Again $(d)$ follows by the Zariski density argument from the integral case
(which is~\cite[Theorem~1.2(6)]{Markman}).
We also refer to~\cite[Proposition 5.1]{Frei} for more details on extending the Markman operator from integral isometries to isometries defined over more general coefficient rings.
\end{proof}
One can reinterpret the condition $(f \otimes g)(u_{v_1}) = u_{v_2}$ in terms of generators of the cohomology ring.
Following~\cite[equation~(3.23)]{Markman}, consider the canonical morphism
\[
B \colon \ H^{\ast}(S,\BQ) \to H^{\ast}(M,\BQ)
\]
defined by
\[
B(x) = \pi_{M \ast}\big( u_{v} \cdot x^{\vee} \big).
\]
We write $B_k(x)$ for its component in degree $2k$. In particular,
$B_0(x) = -(x,v)$ and
$B_1(x) = \theta_{\CF}(x)$ for all $x \in v^{\perp}$.

\begin{Lemma} 
Let $f \colon H^{\ast}(M_1, \BQ) \to H^{\ast}(M_2, \BQ)$ be a degree-preserving isometric ring isomorphism.
Then the following are equivalent:
\begin{enumerate}
\item[$(a)$] $(f \otimes g)(u_{v_1}) = u_{v_2}$,
\item[$(b)$] $f(B(x)) = B(gx)$ for all $x \in H^{\ast}(S_1,\BQ)$.
\end{enumerate}
\end{Lemma}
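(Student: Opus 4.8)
The plan is to recognise that $B$ is nothing but the correspondence operator with kernel $u_v$, so that conditions $(a)$ and $(b)$ become two ways of saying that $f$ and $g$ intertwine the kernels $u_{v_1}$ and $u_{v_2}$. First I would unwind $B$ via the Künneth formula. Writing $u_{v_1}=\sum_i \pi_{M_1}^\ast(a_i)\,\pi_{S_1}^\ast(b_i)$ with $a_i\in H^\ast(M_1,\BQ)$ and $b_i\in H^\ast(S_1,\BQ)$, the projection formula gives
\[
B(x)=\pi_{M_1\ast}\big(u_{v_1}\cdot\pi_{S_1}^\ast(x^\vee)\big)=\sum_i a_i\int_{S_1}b_i\,x^\vee .
\]
Since $(-)^\vee$ only reverses the sign on $H^2$, one has $\int_S c\,x^\vee=\int_S c^\vee\,x=-(c,x)$ directly from the definition $(y,z)=-\int_S y^\vee z$ of the Mukai pairing. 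Hence $B(x)=-\sum_i a_i\,(b_i,x)$, which says precisely that $B$ is the image of $u_{v_1}$ under the isomorphism
\[
H^\ast(M_1,\BQ)\otimes H^\ast(S_1,\BQ)\xrightarrow{\ \cong\ }\Hom\big(H^\ast(S_1,\BQ),H^\ast(M_1,\BQ)\big),\qquad a\otimes b\mapsto\big(x\mapsto -a\,(b,x)\big),
\]
which is an isomorphism exactly because the Mukai pairing on $S_1$ is non-degenerate. The same computation on the $M_2$ side exhibits $u_{v_2}$ as the kernel of the operator $B$ attached to $M_2$.

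Second, I would transport the two operations across this identification. On the tensor side $f\otimes g$ sends $a\otimes b$ to $f(a)\otimes g(b)$; under the isomorphism above this is the conjugation $\Phi\mapsto f\circ\Phi\circ g^{-1}$. This is the single place where the hypotheses are used: since $g$ is an isometry, $(g(b),x)=(b,g^{-1}x)$, which is exactly what turns the $g$ acting on the $S$-factor of the kernel into a $g^{-1}$ acting on the argument of the operator. Consequently $(a)$, namely $(f\otimes g)(u_{v_1})=u_{v_2}$, translates into $f\circ B\circ g^{-1}=B$, where the left-hand $B$ is the operator for $M_1$ and the right-hand $B$ the one for $M_2$. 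This identity is visibly equivalent to $f\circ B=B\circ g$, i.e. to $f(B(x))=B(gx)$ for all $x\in H^\ast(S_1,\BQ)$, which is $(b)$.

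Since every step is an equivalence, no separate converse is required: $(a)$ and $(b)$ are the two readings of the one identity $f\circ B\circ g^{-1}=B$ across the isomorphism of the first step. I expect the only delicate point to be the bookkeeping with the duality $(-)^\vee$ and with the adjoint of $g$ — concretely, verifying $\int_S c\,x^\vee=-(c,x)$ and that the adjoint of an isometry is its inverse. Once these are in place the statement is pure linear algebra over $\BQ$; nothing about the geometry of the $M_i$ beyond the Künneth formula enters, and neither $g(v_1)=v_2$ nor the multiplicativity of $f$ is needed, only that $g$ is an isometry and $f$ is linear.
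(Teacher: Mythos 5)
Your proof is correct and takes essentially the same route as the paper: the paper's K\"unneth computation that moves $g^{-1}$ off the kernel onto the argument, namely $\pi_{M_2 \ast}\big( \big(1 \otimes g^{-1}\big) u_{v_2} \cdot x^{\vee} \big) = \pi_{M_2 \ast}\big( u_{v_2} \cdot (gx)^{\vee} \big)$, is exactly your adjoint/conjugation step, and its passage from ``equality of $\pi_{M_2\ast}(\,\cdot\,\, x^{\vee})$ for all $x$'' to equality of the kernels themselves is exactly the injectivity of your kernel-to-operator isomorphism. Both arguments rest on the same two facts---non-degeneracy of the Mukai pairing and that the adjoint of the isometry $g$ is $g^{-1}$---so your packaging of the computation as the single identity $f \circ B \circ g^{-1} = B$ is only a cosmetic difference.
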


\begin{proof}
Since $g$ is an isometry of the Mukai lattice we have for $x \in H^{\ast}(S_1)$ the following equality in $H^{\ast}(M_2)$:
\[
\pi_{M_2 \ast}(u_{v_2} \cdot (gx)^{\vee}) = \pi_{M_2 \ast}\big( \big(1 \otimes g^{-1}\big) u_{v_2} \cdot x^{\vee} \big).
\]
{\samepage
Indeed, if we write $u_{v_2} = \sum_i a_i \otimes b_i$ under the K\"unneth decomposition, then
\begin{align*}
\pi_{M_2 \ast}\big( \big(1 \otimes g^{-1}\big)( u_{v_2} ) \cdot x^{\vee} \big)
& = \sum_i a_i \int_{S_1} g^{-1}(b_i) x^{\vee}
 = \sum_i -a_i \cdot \big(g^{-1}(b_i) \cdot x\big) \\
& = \sum_i -a_i \cdot (b_i \cdot g(x))  = \sum_i a_i \int_{S_2} b_i g(x)^{\vee} \\
& =  \pi_{M_2 \ast}\big( u_{v_2} \cdot g(x)^{\vee} \big).
\end{align*}}

Hence we see that:
\begin{align*}
(b) \ \Longleftrightarrow \ & \forall x \in H^{\ast}(S_1, \BZ) \colon \ f \pi_{M_1 \ast}\big(u_{v_1} \cdot x^{\vee}\big) = \pi_{M_2 \ast}\big( u_{v_2} \cdot (gx)^{\vee} \big) \\
\Longleftrightarrow\ & \forall x \in H^{\ast}(S_1, \BZ) \colon \ \pi_{M_2 \ast}\big( (f \otimes 1)u_{v_1} \cdot x^{\vee} \big) = \pi_{M_2 \ast}\big( \big(1 \otimes g^{-1}\big)u_{v_2} \cdot x^{\vee} \big) \\
\Longleftrightarrow\ & (f \otimes 1)(u_{v_1}) = \big(1 \otimes g^{-1}\big)(u_{v_2}) \\
\Longleftrightarrow\ & (a).
\tag*{\qed}
\end{align*}
\renewcommand{\qed}{}
\end{proof}

\begin{cor} \label{corX}
In the setting of Theorem~$\ref{thm:Markman_operator}$, $\gamma(g) B(x) = B(gx)$.
\end{cor}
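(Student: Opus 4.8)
The plan is to recognize the statement as a formal consequence of the preceding Lemma, applied to the single map $f = \gamma(g)$. That Lemma asserts, for any degree-preserving isometric ring isomorphism $f \colon H^{\ast}(M_1,\BQ) \to H^{\ast}(M_2,\BQ)$, the equivalence of $(f \otimes g)(u_{v_1}) = u_{v_2}$ with $f(B(x)) = B(gx)$ for all $x$. So the only real task is to check that the Markman operator $\gamma(g)$ is an admissible input for the Lemma and that the left-hand conditions match up.

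First I would invoke Theorem~\ref{thm:Markman_operator}$(a)$: by construction $\gamma(g)$ is a degree-preserving isometric ring isomorphism $H^{\ast}(M_1,\BC) \to H^{\ast}(M_2,\BC)$. Hence $f = \gamma(g)$ satisfies the hypothesis of the Lemma. Next I would observe that Theorem~\ref{thm:Markman_operator}$(b)$ reads $(\gamma(g) \otimes g)(u_{v_1}) = u_{v_2}$, which is exactly condition $(a)$ of the Lemma for $f = \gamma(g)$. Feeding this into the equivalence $(a) \Leftrightarrow (b)$ of the Lemma then immediately yields condition $(b)$, namely $\gamma(g)(B(x)) = B(gx)$ for every $x \in H^{\ast}(S_1,\BQ)$, which is the assertion of the corollary.

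There is essentially no obstacle here, since the genuine computation has already been packaged into the Lemma (it relies on $g$ being a Mukai isometry together with the definition $B(x) = \pi_{M \ast}(u_v \cdot x^{\vee})$), and the existence and defining properties of $\gamma(g)$ are supplied by Theorem~\ref{thm:Markman_operator}. The only point I would keep straight is the bookkeeping of coefficient fields: the theorem produces $\gamma(g)$ over $\BC$, whereas the Lemma is stated over $\BQ$. But since $\gamma(g)$ is $\BC$-linear and both $B$ and the classes $u_v$ are defined over $\BQ$, the asserted identity on $H^{\ast}(S_1,\BQ)$ follows by restriction without any extra argument.
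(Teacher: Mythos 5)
Your proof is correct and is exactly the argument the paper intends: the corollary carries no separate proof precisely because it is the Lemma applied to $f = \gamma(g)$, with the hypothesis on $f$ supplied by Theorem~\ref{thm:Markman_operator}$(a)$ and the Lemma's condition $(a)$ supplied by Theorem~\ref{thm:Markman_operator}$(b)$. One small caveat on your last paragraph: since $\gamma(g)$ for a $\BC$-isometry $g$ need not preserve rational cohomology, the coefficient mismatch is resolved not by restricting to $H^{\ast}(M_1,\BQ)$ but by noting that the Lemma's proof is purely formal (K\"unneth decomposition plus $g$ being an isometry) and therefore holds verbatim with $\BQ$ replaced by $\BC$.
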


\subsection{Universality}
We apply Theorem~$\ref{thm:Markman_operator}$ to study descendent integrals over $M$.
Let $k \geq 0$ and let $P(t_{ij}, u_r)$ be a~polynomial depending on the variables
\[
t_{j,i}, \quad j = 1, \dots, k, \quad i \geq 1, \qquad \text{and} \qquad u_r, \quad r \geq 1.
\]
Let also $A = (a_{ij})_{i,j=0}^{k}$ be a $(k+1) \times (k+1)$-matrix.

Our main result is the following.

\begin{thm}[universality] \label{thm:universality}
There exists $I(P,A) \in \BQ$ $($depending only on $P$ and $A)$ such that
for any $M = M_H(v)$ with $\dim(M)>2$ and for any $x_1, \dots, x_k \in \Lambda$ with
\begin{equation} \label{intersection matrix} \begin{pmatrix}
v \cdot v & (v \cdot x_i)_{i=1}^k \vspace{1mm}\\
(x_i \cdot v)_{i=1}^{k} & ( x_{i} \cdot x_j )_{i,j=1}^{k}
\end{pmatrix} = A \end{equation}
we have
\[ \int_{M} P( B_i(x_j), c_r(T_M) ) = I(P,A). \]
\end{thm}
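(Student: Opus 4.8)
The plan is to realize the integral $\int_M P(B_i(x_j), c_r(T_M))$ as a quantity invariant under the complexified isometry group via the Markman operators, and then to pin down its dependence through the first fundamental theorem of invariant theory. I would first treat a single fixed moduli space $M = M_H(v)$. Since each $B(x)$ is $\BC$-linear in $x$ and $c_r(T_M)$ is a fixed class, the assignment $(x_1,\dots,x_k) \mapsto \int_M P(B_i(x_j), c_r(T_M))$ is a polynomial function $Q_M$ on $(\Lambda \otimes \BC)^{\oplus k}$. For any isometry $g$ of $V := \Lambda \otimes \BC = H^{\ast}(S,\BC)$ fixing $v$, Theorem~\ref{thm:Markman_operator} supplies an operator $\gamma(g)$ which is a degree-preserving isometric ring isomorphism with $\gamma(g)B(x) = B(gx)$ by Corollary~\ref{corX}, with $\gamma(g)c_r(T_M) = c_r(T_M)$, and with $\int_M \sigma = \int_M \gamma(g)\sigma$. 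Applying $\gamma(g)$ inside the integral then gives $Q_M(gx_1,\dots,gx_k) = Q_M(x_1,\dots,x_k)$, so $Q_M$ is invariant under the stabilizer $O(V)_v$ of $v$.

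Next I would invoke invariant theory. Because $(v,v) = \dim M - 2 > 0$, the vector $v$ is anisotropic, so $O(V)_v \cong O(v^{\perp})$ and $V = \BC v \oplus v^{\perp}$. Writing $x_i = \lambda_i v + y_i$ with $y_i \in v^{\perp}$, the scalars $\lambda_i = (x_i,v)/(v,v)$ are $O(v^{\perp})$-invariant, and by the first fundamental theorem for the complex orthogonal group the remaining invariants are generated by the pairings $(y_i,y_j)$. Hence $Q_M$ is a polynomial in the quantities $(x_i,v)$, $(v,v)$ and $(x_i,x_j)$, that is, in the entries of the matrix $A$. This already shows that for fixed $M$ the integral depends only on $P$ and $A$; write $I_M(P,A)$ for the resulting value.

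Finally I would remove the dependence on $M$. Given a second datum $M' = M_{H'}(S',v')$ and classes $x_j'$ realizing the same matrix $A$, note that $(v,v) = A_{00} = (v',v')$ and both are anisotropic, so Witt's extension theorem produces an isometry $g\colon H^{\ast}(S,\BC) \to H^{\ast}(S',\BC)$ with $g(v) = v'$; here it is essential that we may work over $\BC$, where the two $24$-dimensional Mukai spaces are abstractly isometric and no integral lattice obstruction intervenes. Markman's operator $\gamma(g)$ then yields $\int_M P(B_i(x_j),c_r(T_M)) = \int_{M'} P(B_i(gx_j), c_r(T_{M'}))$. The tuple $(gx_j)$ and the tuple $(x_j')$ lie in $H^{\ast}(S',\BC)$ and have the same Gram matrix $A$ relative to $v'$, so the fixed-$M'$ statement of the previous paragraph gives $\int_{M'}P(B_i(gx_j),c_r(T_{M'})) = I_{M'}(P,A)$. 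Combining these equalities yields $I_M(P,A) = I_{M'}(P,A)$, and I would define $I(P,A)$ to be this common value; it is rational since it is computed by an honest integral over a moduli space with integral classes $x_j \in \Lambda$.

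The step I expect to be the main obstacle is the invariant-theory reduction. The naive idea of building a single isometry carrying one tuple $(v,x_j)$ directly onto the other $(v',x_j')$ breaks down when the tuples are linearly dependent or span a degenerate subspace, since the induced map between their spans need not be well defined: two tuples can share a Gram matrix yet obey different linear relations among isotropic vectors. Routing the argument through the first fundamental theorem sidesteps this entirely, because it presents the integral as a genuine polynomial in the entries of $A$ and so never requires comparing tuples vector by vector. With this in place, the remaining ingredients, namely Markman's Theorem~\ref{thm:Markman_operator} and Witt's theorem over $\BC$, combine in a routine manner.
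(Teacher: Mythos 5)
Your proof is correct, and it takes a genuinely different route from the paper's. Both arguments run on the same engine --- Markman's Theorem~\ref{thm:Markman_operator}, Corollary~\ref{corX} and property~$(d)$, which transport integrals along any complex isometry matching the Mukai vectors, together with Witt transitivity over $\BC$ to pass from $v$ to $v'$ --- but they differ in how the dependence on the tuple $(x_j)$ is reduced to the Gram matrix $A$. The paper argues orbit-by-orbit: it first proves a vanishing statement (Proposition~\ref{prop:vanishing}) for factors $B_s(w)$ with $w$ in the radical of $\mathrm{Span}(v,x_1,\dots,x_k)$ (proved via~\cite{EGL} or via the Lie-algebra/derivation argument), uses it to replace the tuple by one with non-degenerate span (Lemma~\ref{lemma:replace with non-degenerate}), and then matches tuples vector by vector through a Witt-type extension (Lemmas~\ref{lemma:la1} and~\ref{lemma:a2}). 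You instead observe that $Q_M$ is a polynomial function on $(\Lambda_{\BC})^{\oplus k}$ invariant under the stabilizer $O(\Lambda_{\BC})_v\cong O\big(v^{\perp}\big)$, and invoke Weyl's first fundamental theorem for the complex orthogonal group. This is a legitimate shortcut: since the FFT is a statement about polynomial functions rather than about orbits, it is irrelevant that tuples with equal Gram matrices need not be isometric (exactly the failure that forces the paper's Proposition~\ref{prop:vanishing} and Lemma~\ref{lemma:replace with non-degenerate}), so the degenerate case disappears from the argument. What your route buys is brevity and the complete elimination of the vanishing proposition; what it costs is a nontrivial classical input, the FFT for $O_n(\BC)$ --- and note you genuinely need the full orthogonal group rather than $SO$ (extra bracket invariants appear for $SO$ once $k\geq \dim v^{\perp}$), which is available here only because Theorem~\ref{thm:Markman_operator} is stated for arbitrary isometries, including those acting with determinant $-1$ on $v^{\perp}$. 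Two small points you should write out: after splitting $x_i=\lambda_i v+y_i$, expand $Q_M$ in monomials in the $\lambda_i$ and apply the FFT to each coefficient, which is an $O\big(v^{\perp}\big)$-invariant polynomial in the $y_i$; and the FFT only yields that $Q_M$ takes equal values on tuples with equal pairings, which is all you use, so the non-uniqueness of its expression in the pairings is harmless.
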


In other words, the integral
\[ \int_{M} P( B_i(x_j) , c_r(T_M) ) \]
depends upon the above data only through $P$,
the dimension $\dim M = 2n$,
and the pairings $v \cdot x_i$ and $x_i \cdot x_j$ for all $i$, $j$.

The proof of Theorem~\ref{thm:universality} will proceed in several steps.
We begin with a general vanishing result.
\begin{prop} \label{prop:vanishing}
Let $M=M_H(v)$ be a moduli space of stable sheaves on $S$ of dimension $2n>2$,
and let $x_1, \dots, x_k$, $w \in \Lambda_{\BC}$ be given with $w \cdot y = 0$ for all $y \in \{ v, x_1, \dots, x_k, w \}$.
Then any integral of the form
\begin{equation} \label{monomial}
\int_{M} \prod_{i=1}^{\ell} B_{s_i}(w) \cdot (\textup{monomial in } B_i(x_j) \textup{ and } c_r(T_M))
\end{equation}
for some $s_i \in \BZ$ vanishes unless $\ell=0$.
\end{prop}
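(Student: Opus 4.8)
The plan is to exploit the Markman operator to produce a symmetry of the integral that forces it to vanish when $\ell > 0$. The key idea is that the vector $w$ satisfies $w \cdot y = 0$ for every $y$ in the relevant set, so $w$ spans an isotropic direction orthogonal to $v$ and to all the $x_i$; I want to find an isometry $g$ of $\Lambda_\BC$ fixing $v$ and all the $x_j$ but acting nontrivially on $w$, and then compare the integral against itself via $\gamma(g)$.

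First I would construct a suitable isometry. Since $w$ is orthogonal to itself (isotropic) and to $v, x_1, \dots, x_k, w$, I can look for $g \in O(\Lambda_\BC)$ with $g(v) = v$, $g(x_j) = x_j$ for all $j$, and $g(w) = \lambda w$ for some scalar $\lambda \neq 1$. Concretely, since $w$ is isotropic I can pair it with a complementary isotropic vector $w'$ with $(w, w') \neq 0$, and use the standard hyperbolic-plane scaling $w \mapsto \lambda w$, $w' \mapsto \lambda^{-1} w'$, acting as the identity on the orthogonal complement of $\langle w, w'\rangle$. After adjusting $w'$ so that it is orthogonal to $v$ and the $x_j$ (possible because these all pair trivially with $w$, so they lie in $w^\perp$ and I have freedom in choosing the dual isotropic vector), this $g$ fixes $v$ and each $x_j$ while scaling $w$. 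By Theorem~\ref{thm:Markman_operator} applied with $(S_1,v_1) = (S_2,v_2) = (S,v)$, there is an associated operator $\gamma(g) \colon H^\ast(M,\BC) \to H^\ast(M,\BC)$.

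Next I would apply the transformation rules. By Corollary~\ref{corX}, $\gamma(g) B_i(x) = B_i(g x)$ in each degree, and by Theorem~\ref{thm:Markman_operator}(d), $\gamma(g) c_r(T_M) = c_r(T_M)$. Because $\gamma(g)$ is a ring isomorphism preserving the point class, the integral in \eqref{monomial} is invariant under applying $\gamma(g)$ to the integrand. On the integrand, $\gamma(g)$ fixes every $B_i(x_j)$ and every $c_r(T_M)$, while sending each factor $B_{s_i}(w)$ to $B_{s_i}(\lambda w) = \lambda\, B_{s_i}(w)$ (the map $B$ is linear in its argument, and $g$ scales $w$ by $\lambda$). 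Hence the whole integrand is multiplied by $\lambda^{\ell}$, giving
\[
\int_M \prod_{i=1}^{\ell} B_{s_i}(w)\cdot(\text{monomial}) = \lambda^{\ell}\int_M \prod_{i=1}^{\ell} B_{s_i}(w)\cdot(\text{monomial}).
\]
Since $\lambda \neq 1$ can be chosen so that $\lambda^{\ell} \neq 1$ for any fixed $\ell > 0$, the integral must vanish unless $\ell = 0$.

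The main obstacle is the construction of the isometry $g$ in the first step: I must verify that the isotropic vector $w$ can be completed to a hyperbolic pair $(w, w')$ inside $\Lambda_\BC$ with $w'$ orthogonal to $v$ and all $x_j$, and that the resulting scaling map is a genuine isometry of the full Mukai lattice extending by the identity. The hypothesis $w \cdot y = 0$ for $y \in \{v, x_1, \dots, x_k, w\}$ is exactly what guarantees that $v, x_1, \dots, x_k \in w^\perp$, so the hyperbolic plane $\langle w, w' \rangle$ can be arranged orthogonal to them; one should check this is possible over $\BC$ where nondegenerate pairings split off hyperbolic planes freely, and take care that $g(v) = v$ holds (which requires $v \in \langle w, w'\rangle^\perp$, i.e. also $(v, w') = 0$, fixable by the freedom in choosing $w'$). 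Once $g$ is in hand, the rest is a direct application of the established properties of $\gamma(g)$.
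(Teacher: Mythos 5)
Your strategy is essentially the ``exponentiated'' form of the paper's alternative proof, but it has a genuine gap at exactly the step you flag as the main obstacle: the dual isotropic vector $w'$ need not exist. Such a $w'$ (with $w\cdot w'\neq 0$ and $w'\perp\{v,x_1,\dots,x_k\}$) exists if and only if $w\notin U:=\mathrm{Span}(v,x_1,\dots,x_k)$. Indeed, if $w\in U$ then any vector orthogonal to $U$ is orthogonal to $w$, so $w\cdot w'=1$ is impossible; conversely, if $w\notin U=\bigl(U^{\perp}\bigr)^{\perp}$ one can pick $w_0'\in U^{\perp}$ with $w_0'\cdot w=1$ and correct it to the isotropic vector $w_0'+cw\in U^{\perp}$ (using that $w\in U^{\perp}$ by hypothesis). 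The hypotheses of Proposition~\ref{prop:vanishing} do allow $w\in U$: for instance $x_1=w$ is permitted, since $w\cdot x_1=w\cdot w=0$. In that case the problem is not merely your construction of $w'$ but the whole strategy: an isometry is linear, so any $g$ fixing $v$ and every $x_j$ must also fix $w$, and no choice of $w'$ can produce the desired scaling. Your parenthetical justification (``possible because these all pair trivially with $w$'') is therefore incorrect---lying in $w^{\perp}$ is not the relevant condition; the obstruction is $w$ lying in the span. A genericity argument does not repair this either: for $k\geq 22$ a generic tuple $(x_j)$ in $(w^{\perp})^{k}$ spans all of $w^{\perp}\ni w$, so the good locus is not even dense.

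The missing ingredient is precisely the reduction with which the paper's alternative proof begins. Using that $B$ is linear in its argument, expand each $x_j$ in a basis $\{v,w,w'\}\cup\{e_i\}$ with $e_i\perp w$; the condition $x_j\cdot w=0$ kills the $w'$-coefficient, and any factor $B_i(\text{multiple of }w)$ produced by expanding the monomial can be absorbed into the product $\prod_i B_{s_i}(w)$ (this only increases $\ell$, which is harmless). An induction on the number of classes $x_j$ then reduces to the case where every $x_j$ has no $w$-component, i.e., $w\notin\mathrm{Span}(v,x_1,\dots,x_k)$. After this reduction, your argument goes through verbatim: $\gamma(g)$ fixes each $B_i(x_j)$ and $c_r(T_M)$ by Corollary~\ref{corX} and Theorem~\ref{thm:Markman_operator}(d), multiplies each $B_{s_i}(w)$ by $\lambda$, preserves the integral, and $(\lambda^{\ell}-1)\int=0$ forces vanishing for $\ell>0$. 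For comparison, the paper's alternative proof uses the infinitesimal version of your $g$: the Lie algebra element $w\wedge w'\in\so\bigl(v^{\perp}\bigr)$, which acts by derivations and trivially on $H^{4n}(M)$, kills $B_i(x_j)$ and $c_r(T_M)$, and fixes each $B_{s_i}(w)$, giving $\ell\cdot\int=0$; the paper's first proof is genuinely different (it transports the integral to $S^{[n]}$ via $\gamma(g)$ and invokes the Ellingsrud--G\"ottsche--Lehn universality). Once the reduction step is added, your proof is correct and closest in spirit to the alternative proof.
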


\begin{proof}
We give two proofs of this fact.
For the first proof, choose an isometry $g \colon \Lambda_{\BC} \to \Lambda_{\BC}$ such that
\[
g(v) = 1- (n-1) \pt, \qquad
w' := g(w) \in H^2(S,\BC) ,
\]
where $v \cdot v = 2n-2$. Such an isometry exists since $v \cdot v > 0$ and $\SO(\Lambda_{\BC})$ acts transitively on vectors of the same square.
By Theorem~\ref{thm:Markman_operator}$(a)$ for the first and Corollary~\ref{corX} and Theorem~\ref{thm:Markman_operator}$(d)$ for the second equation, we find that
\begin{gather*}
\int_{M} \prod_{i=1}^{\ell} B_{s_i}(w) \cdot \big(\textup{monomial in } B_i(x_j) \textup{ and } c_r(T_M)\big)
\\ \qquad
{}=\int_{S^{[n]}} \gamma(g)\bigg(\prod_{i=1}^{\ell} B_{s_i}(w) \cdot \big(\textup{monomial in } B_i(x_j) \textup{ and } c_r(T_{S^{[n]}}) \big) \bigg)
\\ \qquad
{}=\int_{S^{[n]}} \prod_{i=1}^{\ell} B_{s_i}(w') \cdot \big(\textup{monomial in } B_i(gx_j) \textup{ and } c_r(T_{S^{[n]}})\big).
\end{gather*}
By~\cite[Theorem 4.1]{EGL} (or more precisely, the induction method used in the proof),
this last integral depends upon $w'$ only through its intersection numbers against products of Chern classes of $S$ and degree-components of $g x_j$.\footnote{Since $w' \in H^2(S)$ we always have
$(w' \cdot [g x_j]_{k} ) = 0$ for $k=0,4$.
The vanishing in case $k=2$ follows from $(w' \cdot gx_j) = 0$.}
Since these intersections numbers are all zero, we may replace~$w'$ by $0$, in which case the claimed vanishing follows immediately.
\end{proof}

\begin{proof}[Alternative proof]
If $w=0$ there is nothing to prove, so let $w \neq 0$.
Choose $w' \in \Lambda_{\BC}$ such that $w \cdot w'=1$ and $w' \cdot w' = w' \cdot v = 0$.
Extend $v$, $w$, $w'$ to a basis $\{ v, w, w' \} \cup \{ e_i \}_{i=4}^{24}$ of $\Lambda_{\BC}$.
For any $j$, expand $x_j$ in this basis:
\[
x_j = a_1 v + a_2 w + a_3 w' + a_4 e_4 + \dots + a_{24} e_{24}.
\]
Because $x_j \cdot w=0$, we must have $a_3=0$.
By an induction on the number of classes $x_j$,
we know the claim of Proposition~\ref{prop:vanishing} if $x_j$ is a multiple of $w$.\footnote{Because
the term $B_i(x_j)$ can be moved to the product $\prod_{i=1}^{\ell} B_{s_i}(w)$ in \eqref{monomial}.}
Moreover, if we know the claim for $x_j \in \{ u_1, u_2 \}$ for some $u_1, u_2 \in \Lambda_{\BC}$ then we know it for $x_j = u_1 + u_2$
by expanding the monomial in \eqref{monomial}.
Hence we may replace $x_j$ by $x_j - a_2 w$. In other words, we may assume that $a_2=0$.
Doing so for all $j$, we hence see that
$w' \in \Lambda_{\BC}$ satisfies
\[
w' \cdot w = 1, \qquad w' \perp \mathrm{Span}(w', v, x_1, \dots, x_k).
\]

Consider the Lie algebra $\Fg = \mathfrak{so}\big(v^{\perp}\big) \cong \wedge^2 \big(v^{\perp}\big)$.
Theorem~\ref{thm:Markman_operator} induces a Lie algebra action $\gamma \colon \Fg \to \End H^{\ast}(M)$.
By Theorem~\ref{thm:Markman_operator}(a) $\gamma(\Fg)$ acts by derivations on $H^{\ast}(M)$
and acts trivially on~$H^{4n}(M)$.
(This Lie algebra action is part of the Looijenga--Lunts--Verbistky Lie algebra action, see~\cite[Lemma 4.13]{Markman}.)
Take $w \wedge w' \in \Fg$. Since the Lie algebra acts trivial on~$H^{4n}(M)$ we have
\begin{equation*}
\int_{M} \gamma(w \wedge w') \bigg( \prod_{i=1}^{\ell} B_{s_i}(w) \cdot \big( \textup{monomial in } B_i(x_j) \textup{ and } c_r(T_M) \big) \bigg) = 0.
\end{equation*}
On the other hand, by Corollary~\ref{corX} we have
$\gamma(w \wedge w') B_{s_i}(w) = B_{s_i}(w)$ and $\gamma(w \wedge w') B_i(x_j) = 0$,
and by Theorem~\ref{thm:Markman_operator}$(d)$ we have $\gamma(w \wedge w') c_r(T_{M}) = 0$.
Since $\gamma(w \wedge w')$ acts by derivations, we~also~get\pagebreak
\begin{gather*}
\int_{M} \gamma(w \wedge w') \bigg( \prod_{i=1}^{\ell} B_{s_i}(w) \cdot \big( \textup{monomial in } B_i(x_j) \textup{ and } c_r(T_M) \big) \bigg)
\\ \qquad
{}= \ell \cdot \int_{M} \prod_{i=1}^{\ell} B_{s_i}(w) \cdot \big( \textup{monomial in } B_i(x_j) \textup{ and } c_r(T_M) \big). \tag*{\qed}
\end{gather*}
\renewcommand{\qed}{}
\end{proof}

\begin{Lemma} \label{lemma:replace with non-degenerate}
In the situation of Theorem~$\ref{thm:universality}$, there exists $y_i \in \Lambda_{\BC}$ which have the same intersection matrix as in \eqref{intersection matrix},
satisfy
\[ \int_{M} P( B_i(x_j), c_r(T_M) ) = \int_{M} P( B_i(y_j), c_r(T_M) ) \]
and such that the span $L = \mathrm{Span}( v, y_1, \dots, y_k ) \subset \Lambda_{\BC}$ is non-degenerate $($i.e., the restriction of the inner product of $\Lambda_{\BC}$ onto $L$ is non-degenerate$)$.
\end{Lemma}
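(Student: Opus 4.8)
The goal is to perturb the vectors $x_j$ into new vectors $y_j$ with the same intersection matrix \eqref{intersection matrix}, leaving the descendent integral unchanged, but making the span $L = \mathrm{Span}(v, y_1, \dots, y_k)$ non-degenerate. The plan is to identify the radical (null space) of the inner product restricted to $\mathrm{Span}(v, x_1, \dots, x_k)$ and move each $x_j$ by a vector lying in the orthogonal complement of the span. First I would observe that if $L_0 = \mathrm{Span}(v, x_1, \dots, x_k)$ is already non-degenerate, there is nothing to do, so assume its radical $R = L_0 \cap L_0^{\perp}$ is nonzero. Any $w \in R$ satisfies $w \cdot y = 0$ for all $y \in \{v, x_1, \dots, x_k\}$, and in particular $w \cdot w = 0$; this is exactly the hypothesis needed to invoke Proposition~\ref{prop:vanishing}.

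The key mechanism is as follows. Pick a basis $w_1, \dots, w_m$ of the radical $R$. Since $\Lambda_{\BC}$ is a non-degenerate lattice of rank $24$ and $\dim M > 2$ forces the relevant dimensions to leave room, I can choose vectors $z_1, \dots, z_m \in \Lambda_{\BC}$ that are ``dual'' to the radical in the sense that $z_a \cdot w_b = \delta_{ab}$, while keeping the $z_a$ orthogonal to a chosen complement. The idea is to replace each $x_j$ by $y_j = x_j + \sum_a \lambda_{ja} z_a$ for suitable scalars, chosen so that the new span becomes non-degenerate while the intersection numbers $v \cdot y_j$ and $y_i \cdot y_j$ are preserved. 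Preservation of the intersection matrix requires the perturbation directions to pair trivially against $v$ and against all the $x_i$ modulo the radical — this is the bookkeeping step, and the cleanest route is to add multiples of vectors $w' \in \Lambda_{\BC}$ with $w' \cdot v = 0$ and $w' \cdot x_i = 0$ for all $i$, exactly as in the alternative proof of Proposition~\ref{prop:vanishing}.

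The reason the integral is unchanged is the crucial point. When I expand $P(B_i(y_j), c_r(T_M))$ using the linearity of $B$ in its argument, I obtain $P(B_i(x_j), c_r(T_M))$ plus correction terms, each of which contains at least one factor $B_{s}(w)$ where $w$ is a perturbation vector lying in the radical (or more precisely orthogonal to $v$, all the $x_i$, and to itself). Proposition~\ref{prop:vanishing} states precisely that every such integral with $\ell \geq 1$ factors of $B_s(w)$ vanishes. Hence all correction terms integrate to zero and the integral is preserved. The main obstacle I anticipate is arranging the perturbation so that the hypotheses of Proposition~\ref{prop:vanishing} apply to \emph{every} correction term simultaneously: each added vector $w$ must satisfy $w \cdot y = 0$ for all $y \in \{v, x_1, \dots, x_k, w\}$, and since the corrections may mix several perturbation vectors, I must ensure the perturbation directions are mutually orthogonal and isotropic, not merely individually in the radical. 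Handling this cleanly suggests an induction on $\dim R$: remove one radical direction at a time by a single perturbation, verifying at each stage that the span's degeneracy strictly decreases while Proposition~\ref{prop:vanishing} guarantees invariance of the integral.
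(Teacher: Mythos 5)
Your supporting structure is right---Proposition~\ref{prop:vanishing} is indeed the tool that makes the integral invariant, the perturbation directions must be isotropic, mutually orthogonal, and orthogonal to $v$ and all $x_i$ for the intersection matrix \eqref{intersection matrix} to survive, and one should induct by removing one radical direction at a time---but the mechanism you propose for actually achieving non-degeneracy fails, and your two descriptions of it contradict each other. Your ``key mechanism'' adds vectors $z_a$ dual to a basis $w_1,\dots,w_m$ of the radical, $z_a\cdot w_b=\delta_{ab}$. Since each $w_b$ lies in $\mathrm{Span}(v,x_1,\dots,x_k)$, such a $z_a$ cannot pair to zero with $v$ and all the $x_i$; so it is not an admissible perturbation direction, neither for preserving the matrix nor for invoking Proposition~\ref{prop:vanishing}. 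Concretely, take $k=1$ and $x_1=w$ isotropic and orthogonal to $v$: the Gram matrix forces any valid replacement to satisfy $y_1\cdot y_1=y_1\cdot v=0$, and non-degeneracy of $\mathrm{Span}(v,y_1)$ then forces $y_1=0$; but a vector of the form $w+\lambda z$ with $z\cdot w=1$ is never zero ($w$ and $z$ are linearly independent), so your mechanism cannot reach the unique answer even in the simplest case. Your fallback---setting $y_j=x_j+\lambda_j w'$ with $w'\cdot v=w'\cdot x_i=w'\cdot w'=0$---does preserve the matrix and the integral, but provably never removes the degeneracy: if $w=c_0v+\sum_j c_jx_j$ is a nonzero radical vector, then
\[
c_0 v+\sum_j c_j y_j \;=\; w+\Bigl(\sum_j c_j\lambda_j\Bigr)w'
\]
is a nonzero element of the new span (for $w'$ outside the old span, as you intend) which pairs to zero with $v$ and every $y_j$, using $w'\cdot w=0$, which holds automatically because $w\in\mathrm{Span}(v,x_i)$. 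So the new span is degenerate again.

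The structural reason both variants fail is Lemma~\ref{lemma:la1}: the rank of $A$ equals $\dim\mathrm{Span}(v,x_1,\dots,x_k)-\dim R$, while any realization of $A$ whose span is non-degenerate must have span of dimension exactly $\mathrm{rank}(A)$. Non-degeneracy therefore has to be achieved by \emph{shrinking} the span, never by perturbing outward into new directions; this is the step missing from your proposal. The paper's proof does exactly this, and it is in fact a special case of your admissible class of perturbations: take the perturbation direction to be the radical vector $w$ itself (it is isotropic and orthogonal to $v$, all $x_i$, and itself), extend $\{w,v\}$ to a basis $u_0=w,u_1=v,\dots,u_d$ of the span, let $\lambda_j$ be the unique scalar with $x_j-\lambda_j w\in\mathrm{Span}(u_1,\dots,u_d)$, and set $y_j=x_j-\lambda_j w$. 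The intersection matrix is preserved and Proposition~\ref{prop:vanishing} kills all correction terms exactly as you argue, but now the direction $w$ is genuinely eliminated: the new span lies in $\mathrm{Span}(u_1,\dots,u_d)$, so its dimension drops by one, and the iteration terminates. Working with a single $w$ per step also disposes of your worry about correction terms mixing several perturbation vectors.
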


\begin{proof}
Let $L = \mathrm{Span}(v, x_1, \dots, x_k)$.
Assume that $L$ is degenerate, i.e., there exists a non-zero $w \in L$ such that $w \cdot x_i = 0$ for all $i$ and $w \cdot v=0$.
Since $v \cdot v \geq 2$, we have that $v$, $w$ are linearly independent. Hence they can be extended to a basis $u_0, \dots, u_d$ of $L$ with $u_0 = w$ and $u_1=v$.
For every $i$ let $\lambda_i \in \BC$ be the unique scalar such that
\[
x_i - \lambda_i w \in \mathrm{Span}( u_1, \dots, u_d ).
\]
We hence obtain
\begin{gather*}
\int_{M} P( B_i(x_j), c_r(T_M) ) =
\int_{M} P( B_i(x_j - \lambda_j w) + B_i(w), c_r(T_M) )
\\[1ex] \qquad\qquad\qquad\quad\,
{} \overset{\rm Proposition~\ref{prop:vanishing}}{=}
\int_{M} P( B_i(x_j - \lambda_j w), c_r(T_M) ).
\end{gather*}
Set $y_j = x_j - \lambda_j w$. If $\mathrm{Span}(v, y_1, \dots, y_k)$ is non-degenerate, we are done,
otherwise repeat the above process. This process has to stop, since the dimension of the span drops by one in each step.
\end{proof}

We also require two basic linear algebra lemmata:
\begin{Lemma} \label{lemma:la1}
Let $V$ be a finite-dimensional $\BC$-vectorspace with a $\BC$-linear inner product.
Let $v_1, \dots, v_k \in V$ be a list of vectors with Gram matrix
\[
g = \big( g_{ij} \big)_{i,j=1}^{k}, \qquad g_{ij} = \langle v_i, v_j \rangle.
 \]
Then $\mathrm{rank}(g) \leq \dim( \mathrm{Span}(v_1, \dots, v_k))$.
If moreover $\mathrm{Span}(v_1, \dots, v_k)$ is a non-degenerate subvectorspace of $V$, then
$\mathrm{rank}(g) = \dim( \mathrm{Span}(v_1, \dots, v_k))$.
\end{Lemma}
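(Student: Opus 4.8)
The plan is to factor the Gram matrix $g$ through a chosen basis of $W := \mathrm{Span}(v_1,\dots,v_k)$ and to reduce both assertions to elementary rank estimates for products of matrices, the point being that the inner product is only a $\BC$-bilinear (possibly degenerate) pairing, so no positivity is available.

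First I would set $d = \dim W$ (note $d \le k$ automatically), choose a basis $e_1,\dots,e_d$ of $W$, and form the Gram matrix of this basis, $G = \big(\langle e_a, e_b\rangle\big)_{a,b=1}^{d}$. Writing each vector in coordinates as $v_i = \sum_{a=1}^{d} M_{ia} e_a$ yields a $k\times d$ matrix $M = (M_{ia})$, and bilinearity of the inner product gives
\[
g_{ij} = \langle v_i, v_j\rangle = \sum_{a,b} M_{ia} M_{jb} \langle e_a, e_b\rangle,
\]
so that $g = M G M^{\top}$.

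For the inequality I would simply invoke submultiplicativity of rank:
\[
\mathrm{rank}(g) = \mathrm{rank}\big(M G M^{\top}\big) \le \mathrm{rank}(M) \le \min(k,d) \le d = \dim W,
\]
which needs no hypothesis on the form. For the equality the key observation is that $M$ has full column rank $d$: its rows are the coordinate vectors of $v_1,\dots,v_k$, and since these span the $d$-dimensional space $W$ the row space is all of $\BC^d$, whence $\mathrm{rank}(M)=d$. Non-degeneracy of $W$ means precisely that the restricted form is non-degenerate, i.e.\ that $G$ is invertible.

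The only mildly delicate step is upgrading the inequality to an equality. Since $G$ is invertible, $G M^{\top}$ still has rank $d$; and because $M$ has full column rank it admits a left inverse, so left multiplication by $M$ preserves rank. Hence $\mathrm{rank}(g) = \mathrm{rank}\big(M\,(G M^{\top})\big) = \mathrm{rank}\big(G M^{\top}\big) = d = \dim W$. I expect no genuine obstacle; the one thing to watch is that the rank-preservation argument rests on $M$ having a left inverse — a consequence of $v_1,\dots,v_k$ spanning $W$ — rather than on any positive-definiteness, which is unavailable for a general $\BC$-bilinear pairing.
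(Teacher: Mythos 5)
Your proof is correct, and it takes a genuinely different route from the paper's. The paper works with submatrices of $g$ itself: for the inequality it shows that any sublist of vectors whose Gram matrix is invertible must be linearly independent (pair a linear relation with each vector of the sublist and multiply the resulting system by the inverse of that Gram matrix); for the equality it chooses a basis of $L=\mathrm{Span}(v_1,\dots,v_k)$ from among the $v_i$ and identifies the Gram matrix of this basis with the matrix of the isomorphism $L\to L^{\vee}$ induced by the restricted form, which is invertible by non-degeneracy, giving $\mathrm{rank}(g)\ge \dim L$. You instead factor $g=MGM^{\top}$ through an arbitrary basis of $W$ and run rank calculus: submultiplicativity yields the inequality, and invertibility of $G$ (non-degeneracy) together with full column rank of $M$ (spanning, hence existence of a left inverse) yields the equality. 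Both arguments are elementary, but they buy slightly different things. Your factorization treats the two claims uniformly, makes it transparent that the upper bound needs no hypothesis on the form, and sidesteps a step the paper leaves implicit: deducing $\mathrm{rank}(g)\le\dim L$ from the linear-independence criterion requires knowing that the rank of the symmetric matrix $g$ is witnessed by a suitable invertible submatrix whose index sets can be matched with vectors. The paper's route, in turn, produces its conclusions in exactly the form in which the lemma is consumed in the proof of Lemma~\ref{lemma:a2}: a basis of $L$ chosen among the $v_i$ has invertible Gram matrix, and any sublist with invertible Gram matrix is linearly independent.
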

\begin{proof}
Let $w_1, \dots, w_{\ell} \in V$ be a list of vectors such that $h_{ij}=\langle w_i, w_j \rangle$ is invertible.
Pairing any linear relation between the $w_i$'s with $w_j$ for $j=1,\dots, \ell$,
and multiplying this system of equations by the inverse of $h$
shows that the $w_1, \dots, w_{\ell}$ are linearly independent.
This proves the first claim.
For the second claim, we can choose a subset $\{ w_1, \dots, w_d \} \subset \{ v_1, \dots, v_k \}$ which forms a basis of $L = \mathrm{Span}(v_1, \dots, v_k)$
and observe that the matrix of the isomorphism $L \to L^{\vee}$ induced by the inner product
with respect to the basis $\{ w_i \}$ and the dual basis $\{ w_i^{\ast} \}$
is the Gram matrix of the $w_i$.
This shows that $\mathrm{rank}(g) \geq \dim L$.
\end{proof}
\begin{Lemma} \label{lemma:a2}
Let $V$ be a finite-dimensional $\BC$-vectorspace with a $\BC$-linear inner product.
Let $v_1, \dots, v_k \in V$ and $w_1, \dots, w_k \in V$ be lists of vectors such that
\begin{itemize}\itemsep=0pt
\item[$(i)$] $L = \mathrm{Span}(v_1, \dots, v_k)$ is non-degenerate,
\item[$(ii)$] $M = \mathrm{Span}(w_1, \dots, w_k)$ is non-degenerate,
\item[$(iii)$] $\langle v_i, v_j \rangle = \langle w_i, w_j \rangle$ for all $i,j$.
\end{itemize}
Then there exists an isometry $\varphi \colon V \to V$ such that $\varphi(v_i) = w_i$ for all $i$.
\end{Lemma}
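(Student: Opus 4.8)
The plan is to extend the given lists to bases adapted to the subspaces $L$ and $M$, build an isometry between these subspaces, and then extend it to all of $V$ using Witt's theorem for symmetric bilinear forms over $\BC$. First I would invoke Lemma \ref{lemma:la1}: since $L$ is non-degenerate, the Gram matrix $g_{ij} = \langle v_i, v_j\rangle$ has rank equal to $\dim L =: d$, and the identical reasoning applied to $M$ (also non-degenerate with the same Gram matrix by condition $(iii)$) gives $\dim M = \mathrm{rank}(g) = d$ as well. Thus $L$ and $M$ have the same dimension.

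Next I would select an index set $\{i_1,\dots,i_d\}$ such that $\{v_{i_1},\dots,v_{i_d}\}$ is a basis of $L$; by the proof of Lemma \ref{lemma:la1} this amounts to choosing $d$ columns of $g$ that span its column space. I claim the \emph{same} indices make $\{w_{i_1},\dots,w_{i_d}\}$ a basis of $M$: linear independence of these $w$'s follows because any relation among them would, upon pairing against the $w_{i_\alpha}$ and using that their Gram matrix (a principal submatrix of $g$, hence invertible since it represents the induced isomorphism $L \to L^{\vee}$) is non-singular, force all coefficients to vanish; and there are $d = \dim M$ of them. Define the linear map $\varphi_0 \colon L \to M$ by $\varphi_0(v_{i_\alpha}) = w_{i_\alpha}$. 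Because both bases have the \emph{same} Gram matrix, $\varphi_0$ is an isometry $L \xrightarrow{\cong} M$, and moreover $\varphi_0(v_j) = w_j$ for every $j$: indeed, if $v_j = \sum_\alpha c_\alpha v_{i_\alpha}$, then pairing against the basis vectors and using equality of all the $\langle v_i,v_j\rangle = \langle w_i,w_j\rangle$ shows that $w_j$ and $\sum_\alpha c_\alpha w_{i_\alpha}$ have identical pairings with a basis of the non-degenerate space $M$, hence are equal.

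The remaining task is to extend the isometry $\varphi_0 \colon L \to M$ to an isometry $\varphi \colon V \to V$. Here I would appeal to Witt's extension theorem: over a field of characteristic zero, any isometry between non-degenerate subspaces of a vector space equipped with a non-degenerate symmetric bilinear form extends to an isometry of the whole space. The Mukai form on $\Lambda_{\BC}$ is non-degenerate, so the hypotheses are met. The main subtlety to check — and the only real obstacle — is that the \emph{ambient} form is non-degenerate and that $L$, $M$ are non-degenerate subspaces, which is exactly what conditions $(i)$ and $(ii)$ supply; with those in hand one passes to the orthogonal complements $L^{\perp}$ and $M^{\perp}$, which are themselves non-degenerate of equal dimension, choose any isometry between them (such exists since non-degenerate complex quadratic spaces of equal dimension are isometric, all being equivalent to the standard form), and take the direct sum with $\varphi_0$. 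Setting $\varphi(v_j) = \varphi_0(v_j) = w_j$ then gives the desired global isometry.
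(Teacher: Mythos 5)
Your proposal is correct and follows essentially the same route as the paper: equal dimensions via Lemma \ref{lemma:la1}, a basis chosen from the $v_i$ whose Gram matrix is invertible so the corresponding $w_i$ form a basis of $M$, the map defined on that basis (with the remaining $v_j \mapsto w_j$ verified by a pairing/non-degeneracy argument, equivalent to the paper's explicit $G^{-1}$ formula), and extension by an arbitrary isometry $L^{\perp} \to M^{\perp}$. The appeal to Witt's theorem is an unnecessary detour since you immediately give the same direct-sum construction the paper uses.
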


\begin{proof}
By Lemma~\ref{lemma:la1} and assumptions $(i)$ and $(ii)$ we know that
\[
\dim L = \mathrm{rank} (\langle v_i, v_j \rangle)_{i,j=1}^{k} =
\mathrm{rank} (\langle w_i, w_j \rangle)_{i,j=1}^{k} = \dim M.
\]
Choose a basis of $L$ from the $v_1, \dots, v_k$, which we can assume is of the form $v_1, \dots, v_d$, where $d=\dim(L)$.
By assumption $(i)$ and Lemma~\ref{lemma:la1} the gram matrix $G := (\langle v_i, v_j \rangle )_{i,j=1}^{d}$ is invertible.
But $G$ is also the Gram matrix of $w_1, \dots, w_d$ by assumption $(iii)$,
so the same lemma implies that $w_1, \dots, w_d$ is linearly independent and hence a basis of $M$.
Define an isometry
\[
\varphi \colon\ V \to V
\]
by setting $\varphi(v_i) = w_i$ for $i=1,\dots, d$,
and by letting $\varphi_{L^{\perp}} \colon L^{\perp} \to M^{\perp}$ be an arbitrary isometry.
It remains to show that $\varphi(v_i) = w_i$ for $i=d+1, \dots, k$.
For this observe that for any $v \in L$ we have\looseness=-1
\begin{gather*}
v = \sum_{a=1}^{d} \langle v, v_a \rangle \big(G^{-1}\big)_{ab} v_b
\end{gather*}
and similarly for any $w \in M$.
The claim hence follows by writing every $v_i$ in this form, applying~$\varphi$ and using assumption $(iii)$.
\end{proof}

We are ready to prove Theorem~\ref{thm:universality}.
\begin{proof}
Let $(M(v), x_i)$ and $(M(v'), x_i')$ be two pairs with the same intersection matrix $A$.
By~Lemma~\ref{lemma:replace with non-degenerate}, we may assume that $v,x_1, \dots, x_k$ and
$v', x_1', \dots, x_k'$ span a non-degenerate subspace of $\Lambda_{\BC}$.
Hence, by Lemma~\ref{lemma:a2}, there exists an isometry
\[
g \colon\ H^{\ast}(S,\BC) \to H^{\ast}(S', \BC)
\]
which takes $(v, x_1, \dots,x_k)$ to $(v', x'_1, \dots, x_k')$.
We find that
\begin{align*}
\int_{M(v)} P(B_i(x_j), c_r(T_{M(v)}))
& \overset{\textup{(Theorem~\ref{thm:Markman_operator})}}{=} \int_{M(v')} \gamma(g) P(B_i(x_j), c_r(T_{M(v')})) \\
& \overset{\textup{(Corollary~\ref{corX})}}{=}
\int_{M(v')} P( B_i(g x_j), c_r(T_{M(v')})) \\
&\qquad =
\int_{M(v')} P( B_i(x_j'), c_r(T_{M(v')})).\tag*{\qed}
\end{align*}
\renewcommand{\qed}{}
\end{proof}

\subsection{Case of dimension 2} \label{subsec:Case dim 2}
We discuss how to evaluate integrals
\begin{equation} \label{descendent integral repeated}
\int_{M} \tau_{k_1}(\gamma_1) \cdots \tau_{k_{\ell}}(\gamma_{\ell}) P(c_r(T_{M})),
\end{equation}
whenever $M = M_H(v)$ is a $2$-dimensional moduli space of stable sheaves, and hence a $K3$ surface.
The universal family\footnote{If only a twisted universal family exists, then
we have an equivalence to the derived category of twisted sheaves on $M$ with the corresponding twist, see~\cite{HuybrechtsK3}.} $\CF$ in this case induces a derived auto-equivalence
\[
\Phi\colon \  D^b(S) \to D^b(M), \qquad
\CE \mapsto \pi_{M \ast}( \pi_S^{\ast}(\CE) \otimes \CF ).
\]
The induced action on cohomology
\[
\Phi_{\ast}\colon\  H^{\ast}(S,\BZ) \to H^{\ast}(M, \BZ),
\qquad \gamma \mapsto \pi_{M \ast}( v(\CF) \cdot \pi_S^{\ast}(\gamma))
\]
defines an isometry of Mukai lattices (in fact, a Hodge isometry), see~\cite[Chapter 16]{HuybrechtsK3} for references for these well-known facts.

We specialize to the case where $\rk(v) > 0$,
which is the only one we consider in the applications.
Consider the normalized action
\[
\widetilde{\Phi}\colon\  H^{\ast}(S,\BQ) \to H^{\ast}(M, \BQ), \qquad
\gamma \mapsto \pi_{M \ast}\big( {\rm e}^{-c_1(\CF)/\rk(v)} v(\CF) \cdot \pi_S^{\ast}(\gamma)\big).
\]
Let us write $\ch(\CF) = \rk(v) + \pi_M^{\ast}(\ell) + \pi_S^{\ast}(c_1(v)) + (\ldots)$, where $\ldots$ stands for terms of degree $\geq 4$.
Then we have $\widetilde{\Phi} = {\rm e}^{-\ell/\rk(v)} \Phi_{\ast}({\rm e}^{-c_1(v)/\rk(v)} \cup ( - ) )$
which shows that $\widetilde{\Phi}$ is still a Hodge isometry.
Using the fact that $\widetilde{\Phi}$ is a Hodge isometry implies\footnote{By direct computation, the degree
zero component of $\widetilde{\Phi}(\gamma)$ is $\rk(v) \int_S \gamma$.
Then observe the degree $1$ term of~$\widetilde{\Phi}(\pt)$ vanishes by construction of $\widetilde{\Phi}$. Hence, $\big(\widetilde{\Phi}(\pt), \widetilde{\Phi}(\pt)\big)=0$
shows the first line. The others follow similarly.}
\begin{gather}
\widetilde{\Phi}(\pt)  = \rk(v),\qquad
\widetilde{\Phi}(L)  = \varphi(L),\qquad
\widetilde{\Phi}(1) = \frac{1}{\rk(v)} \pt, \label{phi tilde}
\end{gather}
where $\varphi\colon H^2(S,\BQ) \to H^2(S,\BQ)$ is a Hodge isometry.

\subsection{Proof of Theorem~\ref{thm:intro main}}

If $\dim M > 2$, the claim follows by Theorem~\ref{thm:universality} since
$(a)$ any descendent $\tau_k(\gamma)$ defined as in \eqref{defn descendent}
can been written as a polynomial in classes $B_j(x)$,
and $(b)$ for any list of vectors $v, x_1, \dots, x_k \in \Lambda_{\BC}$
after an isometry of $\Lambda_{\BC}$ we may assume that $v$ is the Mukai vector which defines the Hilbert scheme of $n$ points on a $K3$ surface.\footnote{By Eichler's criterion~\cite[Lemma~7.5]{GHS}, this isometry can be defined over the integers.}

If $\dim M = 2$ and $\rk(v) > 0$, as discussed in Section~\ref{subsec:Case dim 2} any descendent $\tau_{k_i}(\gamma)$
can be written in terms of polynomials in classes $\widetilde{\Phi}(\alpha)$, where $\alpha$ is effectively determined by $\gamma$.
Since any integral \eqref{descendent integral repeated} can involve
at most two classes of positive degree, this integral can be written
as linear combination of the Mukai pairing between classes $\widetilde{\Phi}(\alpha)$ and $\widetilde{\Phi}(\alpha')$ for various $\alpha$, $\alpha'$.
Since~$\widetilde{\Phi}$ is a Hodge isometry, these are just the Mukai pairings between $\alpha$ and $\alpha'$.
This effectively determines the integrals \eqref{descendent integral}.
We also refer to Section~\ref{subsec:GK conjecture dim 2} for a concrete implementation of this algorithm.

The case where $\dim(M) = 2$ and $\rk(v) = 0$ is similar to the $\dim M = 2$, $\rk(v)>0$ case, and left to the reader.
\qed

\section{The G\"ottsche--Kool conjecture} \label{GK conjecture}

Let $S$ be a $K3$ surface and let $M$ be a proper fine $2n$-dimensional moduli space of stable sheaves on $S$ of Mukai vector $v$. Let $\CF$ be a universal family. We assume that $\rk(v) > 0$.
Our goal is to show that
for any $\alpha \in K(S)$,
class $L \in H^2(S)$ and $u \in \BC$ we have
\begin{equation*}
\int_{M} c(\alpha_M) {\rm e}^{\mu(L) + u \mu(\pt)}
=\int_{S^{[n]}} c(\beta_{S^{[n]}}) {\rm e}^{\mu(L) + u \rk(v) \mu(\pt)},
\end{equation*}
where $\beta \in K(S)$ is as specified in Theorem~\ref{thm:Intro}.

In Section~\ref{subsec:GK conjecture dim 2}, we first tackle the case $\dim M = 2$ separately,
and then afterwards prove the $\dim M > 2$ case.

\subsection[Proof of Theorem~1.2 in case dim(M)=2]
{Proof of Theorem~\ref{thm:Intro} in case $\boldsymbol{\dim(M)=2}$}\label{subsec:GK conjecture dim 2}
Observe that $S^{[1]} \cong S$, and for $\beta \in K(S)$ and $L \in H^2(S)$ we have
\[
\beta_{S^{[1]}} = \ch(\beta) - \chi(\beta), \qquad
\mu_{S^{[1]}}(L) = L \in H^2(S), \qquad
\mu_{S^{[1]}}(\pt) = \pt.
\]
Hence we need to prove
\begin{equation}
\label{thm eqn 3}
\int_{M} c(\alpha_M) {\rm e}^{\mu(L) + u \mu(\pt)}
=
\int_{S} c(\beta) {\rm e}^{L + u \rk(v) \pt}.
\end{equation}
Recall from Section~\ref{subsec:Case dim 2} the Hodge isometry $\widetilde{\Phi} \colon H^{\ast}(S,\BQ) \to H^{\ast}(M,\BQ)$ defined by the universal family $\CF$.
By comparing the definition of $\alpha_M$ and $\mu(\sigma)$ with the correspondence defining $\widetilde{\Phi}$ we find
\begin{gather*}
\alpha_M  = -\frac{1}{\sqrt{\td_M}} \widetilde{\Phi}(v(\alpha)),
\\
\mu_M(\sigma)  = \biggl[ - \frac{1}{\sqrt{\td_M}} \widetilde{\Phi}\big(\sigma/\sqrt{\td_S}\big) \biggr]_{\deg(\sigma)}.
\end{gather*}
In particular, by \eqref{phi tilde} we have $\mu_M(L) = -\widetilde{\Phi}(L)$.
Using \eqref{phi tilde} we obtain
\begin{gather*}
\int_{M} \mu_M(\pt) = \int_{M} -(1-\pt) \rk(v) \cdot 1 = \rk(v),
\\
\int_M \mu_M(L)^2 = \int_{M} \big({-} \widetilde{\Phi}(L)\big)^2 = \big(\widetilde{\Phi}(L), \widetilde{\Phi}(L)\big) = (L,L) = \int_S L^2,
\\
\int_{M} c_1(\alpha_M) \mu_M(L) = \int_{M} \alpha_M \cup \big({-} \widetilde{\Phi}(L)\big) = \int_{M} \widetilde{\Phi}(v(\alpha)) \cdot \widetilde{\Phi}(L)= \big(\widetilde{\Phi}(v(\alpha)), \widetilde{\Phi}(L)\big)
\\ \hphantom{\int_{M} c_1(\alpha_M) \mu_M(L)}
{} = (v(\alpha), L)= \int_{S} c_1(\alpha) \cdot L.
\end{gather*}
Using \eqref{phi tilde} again we moreover have
\begin{align*}
\alpha_M &= -(1-\pt) \widetilde{\Phi}( \rk(v) + c_1(\alpha) + v_2(\alpha) )
\\
&= - \rk(v) \int_{S} v_2(\alpha) - \varphi(c_1(\alpha)) + \biggl( - \frac{\rk(\alpha)}{\rk(v)} + \rk(v) \int_S v_2(\alpha) \biggr) \pt,
\end{align*}
and hence (with $\alpha_{M,k}$ be the degree $2k$ component of $\alpha_M$) we get
\[ \int_{M} c_2(\alpha_M) = \int_{M} - \alpha_{M,2} + \frac{\alpha_{M,1}^2}{2} = \frac{\rk(\alpha)}{\rk(v)} - \rk(v) \int_S v_2(\alpha) + \frac{c_1(\alpha)^2}{2}. \]
By inspection one sees now that if $\beta$ satisfies \eqref{conditions on beta},
then equation \eqref{thm eqn 3} holds.
This completes the proof. \qed

\subsection{Comparing normalizations} 
From now on assume that
\[ \dim M > 2. \]
Let $\alpha \in K(S)$ and consider the definition of $\alpha_M$
using the Grothendieck--Riemann--Roch formula:
\begin{gather*}
\alpha_M = - \pi_{M \ast}\bigg( v(\alpha) \ch(\CF) \sqrt{\td_S}
\exp\biggl( - \frac{c_1(\CF)}{\rk(v)} \biggr) \bigg).
\end{gather*}

The class $\alpha_M$ is easily expressed in terms of Markman's normalization:
\begin{Lemma} We have
\[
\alpha_M=
- B\bigg( v\big(\alpha^{\vee}\big) \exp\bigg( \frac{c_1(v)}{ \rk(v)} \bigg) \bigg)
\exp\bigg(  B_1\bigg( \frac{-\pt}{\rk(v)} - \frac{v}{v \cdot v} \bigg) \bigg).
\]
\end{Lemma}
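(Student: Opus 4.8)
The plan is to start from the Grothendieck--Riemann--Roch expression for $\alpha_M$ displayed just above the lemma and to reconcile the two competing normalizations of the universal sheaf: the twist $\exp({-}c_1(\CF)/\rk(v))$ entering the definition of $\alpha_M$, and the twist $\exp(\theta_{\CF}(v)/(v,v))$ built into $u_v$ and hence into $B$. To keep these apart I would introduce the un-normalized operator
\[
\widetilde{B}(x) = \pi_{M\ast}\big( \ch(\CF)\sqrt{\td_S}\cdot x^{\vee} \big),
\]
so that, the normalizing factor being pulled back from $M$, the projection formula gives $B(x) = \exp(\theta_{\CF}(v)/(v,v))\,\widetilde{B}(x)$.

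First I would split off the part of the twist that lives on $M$. Because $H^1(S)=0$, the class $c_1(\CF)$ has no $(1,1)$-Künneth component, so $c_1(\CF) = \pi_M^{\ast}\ell + \pi_S^{\ast} c_1(v)$ with $\ell \in H^2(M)$ its $(2,0)$-component (as in Section~\ref{subsec:Case dim 2}). Hence $\exp({-}c_1(\CF)/\rk(v))$ factors, and the factor $\exp({-}\ell/\rk(v))$, being pulled back from $M$, leaves the pushforward:
\[
\alpha_M = -\exp\big({-}\ell/\rk(v)\big)\,\pi_{M\ast}\big( \ch(\CF)\sqrt{\td_S}\,v(\alpha)\exp({-}c_1(v)/\rk(v)) \big).
\]
Next I would recognise the remaining pushforward as a value of $\widetilde{B}$. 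Using that $x \mapsto x^{\vee}$ is a ring involution fixing $\sqrt{\td_S}$, one checks the duality identity $\big( v(\alpha^{\vee})\exp(c_1(v)/\rk(v)) \big)^{\vee} = v(\alpha)\exp({-}c_1(v)/\rk(v))$, so the displayed pushforward is precisely $\widetilde{B}\big( v(\alpha^{\vee})\exp(c_1(v)/\rk(v)) \big)$. Converting back to $B$ then yields
\[
\alpha_M = -B\big( v(\alpha^{\vee})\exp(c_1(v)/\rk(v)) \big)\exp\big({-}\ell/\rk(v) - \theta_{\CF}(v)/(v,v) \big).
\]

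It remains to identify the correction exponent with $B_1\big({-}\pt/\rk(v) - v/(v\cdot v)\big)$. Combining $B(x) = \exp(\theta_{\CF}(v)/(v,v))\widetilde{B}(x)$ with $B_0(x) = -(x,v)$ gives the general formula $B_1(x) = \theta_{\CF}(x) - \tfrac{(x,v)}{(v,v)}\theta_{\CF}(v)$, valid on all of $\Lambda$ and refining the identity $B_1|_{v^{\perp}} = \theta_{\CF}$. Evaluating at $v$ and at $\pt$, using $\theta_{\CF}(\pt)=\ell$ (the degree-two, $H^0(S)$-part of $\ch(\CF)$) and $(\pt,v) = -\rk(v)$, gives $B_1(v)=0$ and $B_1(\pt) = \ell + \tfrac{\rk(v)}{(v,v)}\theta_{\CF}(v)$, whence $B_1\big({-}\pt/\rk(v) - v/(v\cdot v)\big) = -\ell/\rk(v) - \theta_{\CF}(v)/(v,v)$, matching the exponent above and finishing the proof. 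The only delicate points are pure bookkeeping: keeping the two normalizations separate, and computing $B_1$ off $v^{\perp}$, where the clean identity $B_1 = \theta_{\CF}$ fails; the vanishing of $H^1(S)$ is what makes the Künneth splitting of $c_1(\CF)$ clean in the first place.
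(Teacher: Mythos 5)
Your proof is correct and follows essentially the same route as the paper's: decompose $c_1(\CF)=\pi_M^{\ast}\theta_{\CF}(\pt)+\pi_S^{\ast}c_1(v)$, pull the $M$-side factor of the twist out of the pushforward by the projection formula, and absorb the two normalization discrepancies into $\exp\big(B_1\big({-}\pt/\rk(v)-v/(v\cdot v)\big)\big)$. The paper compresses into two displayed equalities what you spell out explicitly (the auxiliary operator $\widetilde{B}$, the duality identity $v(\alpha^{\vee})^{\vee}=v(\alpha)$, and the formula $B_1(x)=\theta_{\CF}(x)-\tfrac{(x,v)}{(v,v)}\theta_{\CF}(v)$ valid off $v^{\perp}$), but the content is identical.
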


\begin{proof}
Using that $\Pic(M \times S) = \Pic(M) \oplus \Pic(S)$ we can write
\[ c_1(\CF) = \pi^{M \ast}(\ell) + \pi_S^{\ast}(c_1(v)) \]
for some $\ell \in H^2(M)$. By calculating $\theta_{\CF}(\pt)$ one finds
$\ell = \theta_{\CF}(\pt)$.
Hence
\begin{align*}
\alpha_M
& = - \pi_{M \ast}\bigg(v(\alpha)\ch(\CF)\sqrt{\td_S}\exp\biggl(- \frac{c_1(v)}{ \rk(v)}\biggr)\bigg)
\exp( \theta_{\CF}(\pt) / (\pt \cdot v) )
\\
& = - B\bigg( v\big(\alpha^{\vee}\big) \exp\bigg( \frac{c_1(v)}{ \rk(v)} \bigg) \bigg)
\exp\bigg(  B_1\bigg( \frac{-\pt}{\rk(v)} - \frac{v}{v \cdot v} \bigg) \bigg) .\tag*{\qed}
\end{align*}
\renewcommand{\qed}{}
\end{proof}

For $\sigma \in H^{\ast}(S)$ recall also the class
\[
\mu(\sigma)=- \pi_{M \ast}\big(\ch_2\big(\CF \otimes \det(\CF)^{-1/\rk(v)}\big) \pi_S^{\ast}(\sigma)\big)
\]
(defined by the GRR expression if only a semi-universal family exists).

\begin{Lemma} If $\sigma \in H^{\ast}(S)$ is homogeneous, then $\mu(\sigma)$ is the component of degree $\deg(\sigma)$ of
\[
- \exp\bigg(B_1\bigg( \frac{p}{p \cdot v} - \frac{v}{v \cdot v} \bigg) \bigg)
B\bigg( \sigma^{\vee} \exp\bigg( \frac{c_1(v)}{\rk(v)} \bigg) \sqrt{\td_S}^{-1} \bigg).
\]
\end{Lemma}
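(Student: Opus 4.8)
The plan is to unwind the right-hand side directly from the definitions of $B$ and of the normalized Chern character $u_v$, and then to match the result against the definition of $\mu(\sigma)$ one cohomological degree at a time. First I would expand
\[
B\bigg( \sigma^{\vee} \exp\bigg(\frac{c_1(v)}{\rk(v)}\bigg) \sqrt{\td_S}^{-1}\bigg) = \pi_{M\ast}\bigg( u_v \cdot \bigg(\sigma^{\vee} \exp\bigg(\frac{c_1(v)}{\rk(v)}\bigg)\sqrt{\td_S}^{-1}\bigg)^{\vee}\bigg),
\]
and simplify the Mukai involution of the argument. Since $x \mapsto x^{\vee}$ is a ring homomorphism acting by $(-1)^i$ on $H^{2i}(S)$, it sends $\sigma^{\vee} \mapsto \sigma$, flips the sign in the exponential, and fixes $\sqrt{\td_S}^{-1} = 1-\pt$. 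Substituting $u_v = \exp(\theta_{\CF}(v)/(v,v))\,\ch(\CF)\sqrt{\td_S}$, the two copies of $\sqrt{\td_S}$ cancel, and the class $\exp(\theta_{\CF}(v)/(v,v))$ pulls out of the pushforward as a class on $M$, leaving $\exp(\theta_{\CF}(v)/(v,v))\,\pi_{M\ast}(\ch(\CF)\exp(-c_1(v)/\rk(v))\,\sigma)$.

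Next I would treat the exponential prefactor. The key numerical input is that $p \cdot v = (\pt,v) = -\rk(v)$, from which one checks that the argument $\frac{p}{p\cdot v} - \frac{v}{v\cdot v}$ lies in $v^{\perp}$; hence $B_1$ of it equals $\theta_{\CF}\big(\frac{p}{p\cdot v} - \frac{v}{v\cdot v}\big) = -\theta_{\CF}(\pt)/\rk(v) - \theta_{\CF}(v)/(v,v)$. Writing $\ell = \theta_{\CF}(\pt)$, I then multiply the prefactor $-\exp(-\ell/\rk(v) - \theta_{\CF}(v)/(v,v))$ by the expression from the previous paragraph; the factors $\exp(\pm\theta_{\CF}(v)/(v,v))$ cancel. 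Pulling the surviving $\exp(-\ell/\rk(v))$ back inside the pushforward and combining it with $\exp(-c_1(v)/\rk(v))$ reconstitutes $\exp(-c_1(\CF)/\rk(v)) = \ch(\det(\CF)^{-1/\rk(v)})$, using $c_1(\CF) = \pi_M^{\ast}\ell + \pi_S^{\ast}c_1(v)$ from the previous lemma. This identifies the whole expression with $-\pi_{M\ast}(\ch(\CF \otimes \det(\CF)^{-1/\rk(v)})\,\pi_S^{\ast}\sigma)$.

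Finally I would extract the degree-$\deg(\sigma)$ component. The summand $-\pi_{M\ast}(\ch_k(\CF \otimes \det(\CF)^{-1/\rk(v)})\,\pi_S^{\ast}\sigma)$ lives in cohomological degree $2k-4+\deg(\sigma)$, so only $k=2$ contributes in degree $\deg(\sigma)$, and that term is precisely $\mu(\sigma)$ by its definition. The computation is otherwise a sequence of bookkeeping steps; the place demanding the most care is the interplay of the involution $(-)^{\vee}$ with the exponentials and the $\sqrt{\td_S}$ factors, together with the verification that the argument of the prefactor genuinely lies in $v^{\perp}$ so that the identification $B_1 = \theta_{\CF}$ applies. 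This is where a sign slip would most easily occur, so I would carry out those two checks explicitly before assembling the final identity.
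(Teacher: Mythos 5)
Your proposal is correct and is essentially the paper's own proof run in reverse: the paper starts from the GRR definition of $\mu(\sigma)$ and massages it into the $B$-expression, while you unwind the $B$-expression back to $-\pi_{M\ast}\big(\ch\big(\CF\otimes\det(\CF)^{-1/\rk(v)}\big)\pi_S^{\ast}\sigma\big)$, using the identical ingredients (the definition of $u_v$, the splitting $c_1(\CF)=\pi_M^{\ast}\theta_{\CF}(\pt)+\pi_S^{\ast}c_1(v)$, the check that $\frac{\pt}{\pt\cdot v}-\frac{v}{v\cdot v}\in v^{\perp}$ so that $B_1=\theta_{\CF}$ there, and the degree count isolating $\ch_2$). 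Both the sign bookkeeping through the Mukai involution and the cancellation of the $\exp\big({\pm}\theta_{\CF}(v)/(v,v)\big)$ factors are handled correctly, so there is nothing to fix.
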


\begin{proof}
We have that $\mu(\sigma)$ is the degree $\deg(\sigma)$ component of
\begin{gather*}
- \pi_{M \ast} \big(\ch\big( \CF \otimes \det(\CF)^{-1/\rk(v)} \big) \pi_S^{\ast}(\sigma)\big)
=- \pi_{M \ast} \big(\ch( \CF) \exp( - c_1(\CF) / \rk(v) ) \pi_S^{\ast}(\sigma) \big)
\\ \quad
{}= - \exp\bigg( \frac{\theta_{\CF}(\pt)}{\pt \cdot v}\! -\! \frac{\theta_{\CF}(v)}{v \cdot v} \bigg)
\exp\bigg( \frac{\theta_{\CF}(v)}{v \cdot v} \bigg)
\pi_{M \ast}\big( \ch(\CF) \pi_S^{\ast}\big( \sigma^{\vee} {\rm e}^{c_1(v)/\rk(v)} \sqrt{\td_S}^{-1} \big)^{\vee}\! \sqrt{\td_S} \big)
\\ \quad
{}=  - \exp\bigg( B_1\bigg( \frac{p}{p \cdot v} - \frac{v}{v \cdot v} \bigg) \bigg)
B\bigg( \sigma^{\vee} \exp\bigg( \frac{c_1(v)}{\rk(v)} \bigg) \sqrt{\td_S}^{-1} \bigg),
\end{gather*}
where we used again $c_1(\CF) = \pi_{M}^{\ast} \theta_{\CF}(\pt) + \pi_S^{\ast} c_1(V)$.
\end{proof}

In particular, for $L \in H^2(S)$ we have that
\begin{align*}
\mu(L) & =
B_1\bigg(L \exp\bigg(\frac{c_1(v)}{\rk(v)}\bigg)\bigg) - B_1\bigg(\frac{p}{p \cdot v}
- \frac{v}{v\cdot v} \bigg)
\end{align*}
and that $\mu(\pt)$ is a polynomial in
$B_1\big(\frac{p}{p \cdot v} - \frac{v}{v \cdot v}\big)$ and $B_i(\pt)$.

\subsection{Dependence}
By Theorem~\ref{thm:universality} we conclude that any integral
\begin{equation}
\int_{M} P(\alpha_{M,k}, \mu(L), \mu(u \pt))
\label{integral}
\end{equation}
(such as the Segre number) only depends upon $P$ and
the intersection pairings in the Mukai lattice of the classes
\begin{equation} v, \quad
\pt/\rk(v), \quad
v(\alpha)^{\vee} \exp\left( \frac{c_1(v)}{\rk(v)} \right),
\quad L  \exp\left( \frac{c_1(v)}{\rk(v)} \right),
\quad u \pt. \label{dependence parameters} \end{equation}

Explicitly, the interesting pairings for the first three classes are
\begin{align*}
\text{(i)} \quad   & v \cdot v(\alpha)^{\vee} \exp\bigg( \frac{c_1(v)}{\rk(v)}\bigg)
= - v_2(\alpha) \cdot \rk(v) + \frac{1}{2} \frac{\rk(\alpha)}{\rk(v)} (v \cdot v),
\\
\text{(ii)} \quad   & \pt/ \rk(v) \cdot v(\alpha)^{\vee} \exp\bigg( \frac{c_1(v)}{\rk(v)} \bigg)
= - \frac{\rk(\alpha)}{\rk(v)},
\\
\text{(iii)} \quad   & \bigg(v(\alpha)^{\vee} \exp\bigg( \frac{c_1(v)}{\rk(v)}\bigg)\bigg)^2 = v(\alpha) \cdot v(\alpha).
\end{align*}
The interesting intersections involving $L$ are
\begin{align*}
\text{(iv)} \quad   & v \cdot L  \exp\bigg( \frac{c_1(v)}{\rk(v)} \bigg)
= L \cdot c_1(v) - L \cdot c_1(v) = 0,
\\
& v(\alpha)^{\vee} \exp\bigg( \frac{c_1(v)}{\rk(v)} \bigg)\cdot
L  \exp\bigg( \frac{c_1(v)}{\rk(v)} \bigg)= v(\alpha)^{\vee} \cdot L = -c_1(\alpha) \cdot L,
\\
& \bigg( L  \exp\bigg( \frac{c_1(v)}{\rk(v)} \bigg) \bigg)^2 = L^2.
\end{align*}
The pairings with $u \pt$ are
$u \rk(v)$ times the pairings with $\pt/\rk(v)$.

\subsection{Moving to the Hilbert scheme}
Since \eqref{integral} only depends on the intersection pairings of \eqref{dependence parameters} we have that
\[
\int_{M} P(\alpha_{M,k}, \mu(L), \mu(u \pt))
=
\int_{S^{[n]}} P(\beta_{S^{[n]},k}, \mu(L), \mu(u' \pt))
\]
for any $K$-theory class $\beta \in K(S)$
and $u'\in \BC$ such that the list
\begin{equation}
\label{dep pars 2}
1 - (n-1)\pt, \qquad
\pt, \qquad
v(\beta)^{\vee},
\qquad L,
\qquad u' \pt
\end{equation}
has the same intersection numbers as the list~\eqref{dependence parameters}.
(The list \eqref{dep pars 2} is obtained from~\eqref{dependence parameters} by specializing to
$v=1-(n-1)\pt$, the Mukai vector of $S^{[n]}$.)

The interesting parts of the intersections of \eqref{dep pars 2} are
\begin{align*}
\text{(i)} \quad   & v \cdot v(\beta)^{\vee} = -v_2(\beta) + \frac{1}{2} \rk(\beta) (2n-2), \\
\text{(ii)} \quad   & \pt \cdot v(\beta)^{\vee} = - \rk(\beta), \\
\text{(iii)} \quad  & v(\beta)^{\vee} \cdot v(\beta)^{\vee} = v(\beta) \cdot v(\beta), \\
\text{(iv)} \quad   & v(\beta)^{\vee} \cdot L = -c_1(\beta) \cdot L.
\end{align*}
Equating (i)--(iv) for $M$ and $S^{[n]}$ we hence get the system
\begin{gather*}
- v_2(\alpha) \cdot \rk(v) + \frac{1}{2} \frac{\rk(\alpha)}{\rk(v)} (v \cdot v)  = -v_2(\beta) + \frac{1}{2} \rk(\beta) (2n-2),
\\
- \frac{\rk(\alpha)}{\rk(v)} =  - \rk(\beta),
\qquad
v(\alpha) \cdot v(\alpha)  = v(\beta) \cdot v(\beta),
\qquad
-c_1(\alpha) \cdot L  = -c_1(\beta) \cdot L.
\end{gather*}
Since $v(\alpha)^2 = c_1(\alpha)^2 - 2 \rk(\alpha) v_2(\alpha)$,
this is equivalent to the system:
\begin{equation} \label{final system}
\rk(\beta)  = \frac{ \rk(\alpha) }{ \rk(v) }, \quad \
v_2(\beta)  = \rk(v) v_2(\alpha), \quad \
c_1(\alpha)^2  = c_1(\beta)^2, \quad \
c_1(\alpha) \cdot L  = c_1(\beta) \cdot L.
\end{equation}
Moreover, we must have
\[ -u' =  u' \pt \cdot (1 - (n-1) \pt) = u \pt \cdot v = - \rk(v) u. \]

We have proven the following (which immediately implies
Theorem~\ref{thm:Intro}):
\begin{thm} For any polynomial $P$, we have
\[
\int_{M} P(\alpha_{M,k}, \mu(L), \mu(u \pt))
=
\int_{S^{[n]}} P(\beta_{S^{[n]},k}, \mu(L), \mu(u \rk(v) \pt))
\]
for any $K$-theory class $\beta \in K(S)$ such that
\eqref{final system} is satisfied.
\end{thm}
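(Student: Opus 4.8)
The plan is to deduce this statement as a direct corollary of the universality theorem (Theorem~\ref{thm:universality}), once the classes $\alpha_{M,k}$, $\mu(L)$ and $\mu(u\pt)$ have been rewritten in Markman's normalization. The essential point is that each of these is a polynomial in the classes $B_i(x)$ for a finite, explicit list of lattice vectors $x$, so that the integrand $P(\alpha_{M,k}, \mu(L), \mu(u\pt))$ is exactly of the form to which Theorem~\ref{thm:universality} applies.

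First I would express $\alpha_M$ and $\mu(\sigma)$ in terms of the morphism $B$. Starting from the Grothendieck--Riemann--Roch definitions, I use the splitting $\Pic(M \times S) = \Pic(M) \oplus \Pic(S)$ together with the identity $c_1(\CF) = \pi_M^{\ast}\theta_{\CF}(\pt) + \pi_S^{\ast} c_1(v)$ to absorb the normalizing factor $\det(\CF)^{-1/\rk(v)}$ into an exponential of the degree-two classes $B_1(\pt/(\pt\cdot v))$ and $B_1(v/(v\cdot v))$. This yields closed formulas presenting $\alpha_M$ and $\mu(\sigma)$ as $B$ applied to explicit Mukai-lattice classes, multiplied by an exponential of $B_1$-classes. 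This is the main technical step, and the one requiring genuine care: the normalizations must be tracked so that the resulting expressions are manifestly independent of the choice of universal family. Everything downstream is a matching of intersection numbers.

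With these formulas in hand, the integrand becomes a polynomial in classes $B_i(x)$ where $x$ ranges over the finite list \eqref{dependence parameters}, namely $v$, $\pt/\rk(v)$, $v(\alpha)^{\vee}\exp(c_1(v)/\rk(v))$, $L\exp(c_1(v)/\rk(v))$, and $u\pt$. Theorem~\ref{thm:universality} then shows that $\int_M P(\alpha_{M,k}, \mu(L), \mu(u\pt))$ depends only on $P$, on $\dim M = 2n$, and on the pairwise Mukai pairings among these classes. I would compute these pairings explicitly, obtaining the relations (i)--(iv) recorded above.

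Finally I would specialize to the Hilbert scheme, where $v = 1-(n-1)\pt$ and the analogous list \eqref{dep pars 2} replaces $v(\alpha)^{\vee}\exp(c_1(v)/\rk(v))$ by $v(\beta)^{\vee}$, and so on. Equating the pairings (i)--(iv) for $M$ and for $S^{[n]}$, and using $v(\alpha)^2 = c_1(\alpha)^2 - 2\rk(\alpha) v_2(\alpha)$, collapses the constraints to precisely the linear system \eqref{final system}; the pairing against $u\pt$ forces the rescaling $u' = \rk(v) u$. A second application of Theorem~\ref{thm:universality} then gives the asserted equality, since both integrals are governed by the same $P$, the same dimension, and the same intersection data. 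The only delicate point is the normalization bookkeeping in the first step; the remainder is linear algebra in the Mukai lattice.
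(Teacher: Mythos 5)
Your proposal is correct and follows essentially the same route as the paper: rewrite $\alpha_M$ and $\mu(\sigma)$ in Markman's normalization via $c_1(\CF)=\pi_M^{\ast}\theta_{\CF}(\pt)+\pi_S^{\ast}c_1(v)$, invoke Theorem~\ref{thm:universality} so that the integral depends only on the Mukai pairings of the classes in \eqref{dependence parameters}, and then match those pairings against the Hilbert-scheme specialization \eqref{dep pars 2} to arrive at \eqref{final system} and $u'=\rk(v)u$. No gaps to report.
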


\subsection*{Acknowledgements}
I~thank Martijn Kool for bringing Conjecture~5.1 of~\cite{GK} to my attention
and Lothar G\"ottsche for useful comments.
I~am also indebted to the referees for careful reading of the paper and pointing out several issues in an earlier version.
The author is partially funded by the Deutsche Forschungsgemeinschaft (DFG) -- OB~512/1-1.


\pdfbookmark[1]{References}{ref}
\LastPageEnding

\end{document}